\documentclass[11pt,reqno]{amsart}

\usepackage[a4paper,margin=1.15in]{geometry}
\usepackage{amsmath,amssymb,amsthm,mathtools}
\usepackage{xcolor}
\usepackage{microtype}
\usepackage{enumitem}
\usepackage{hyperref}

\hypersetup{
  colorlinks=true,
  linkcolor=blue!50!black,
  citecolor=blue!50!black,
  urlcolor=blue!50!black
}

\numberwithin{equation}{section}

\newtheorem{theorem}{Theorem}[section]
\newtheorem{lemma}[theorem]{Lemma}
\newtheorem{proposition}[theorem]{Proposition}
\theoremstyle{remark}

\DeclareMathOperator{\vol}{vol}
\newcommand{\R}{\mathbb{R}}
\newcommand{\G}{G}

\title[On the monotonicity of affine quermassintegrals]
{On the monotonicity of affine quermassintegrals}

\author{Shibing Chen}
\address{School of Mathematical Sciences,
University of Science and Technology of China,
Hefei, Anhui 230026, China}
\email{chenshib@ustc.edu.cn}

\author{Yuanyuan Li}
\address{Institute for Theoretical Sciences,
Westlake University, Hangzhou, 310030, China}
\email{lyyuan@westlake.edu.cn}

\author{Xianduo Wang}
\address{School of Mathematical Sciences,
University of Science and Technology of China,
Hefei, Anhui 230026, China}
\email{xdwang@ustc.edu.cn}
\date{}

\begin{document}

\begin{abstract}
Lutwak's affine quermassintegral theory is a foundational component of
modern affine Brunn--Minkowski theory. Developed in the 1980s, it provides
affine analogues of the classical quermassintegrals and has led to a rich
family of sharp affine isoperimetric inequalities. A central question in
this program, going back to Lutwak's 1988 work, is an
Alexandrov--Fenchel-type monotonicity principle for the normalized
\(L^{-n}\)-moment quermassintegrals \(I_{k,-n}\). In one form, this
principle predicts that
\[
        I_{m,-n}(K)^{1/m}\ge I_{k,-n}(K)^{1/k},
        \qquad 1\le m<k\le n .
\]
The question was recorded in Gardner's 2006 book
\emph{Geometric Tomography} as part of its problem list, and the comparison
with the top dimension, \(k=n\), was established by Milman and Yehudayoff
in their 2023 JAMS paper.

We show that the proposed monotonicity does not persist in the full range.
More precisely, for every triple of integers \(m,k,n\) satisfying
\(1\le m<k\le n-1\) and \(n>(m+2)(k+2)-2\), there exists an
origin-symmetric \(C^2_+\) convex body \(K\subset\mathbb R^n\) such that
\[
        I_{m,-n}(K)^{1/m} < I_{k,-n}(K)^{1/k}.
\]
The example is obtained from the Euclidean ball by an arbitrarily small
degree-four spherical harmonic perturbation.

On the positive side, we prove that the endpoint chain is true in dimension
three: for every convex body \(K\subset\mathbb R^3\),
\[
        I_{1,-3}(K)\ge I_{2,-3}(K)^{1/2}\ge I_{3,-3}(K)^{1/3}=1.
\]
The equality cases in both non-trivial inequalities are exactly ellipsoids,
up to translation and nonsingular affine transformations.
\end{abstract}

\maketitle

\section{Introduction}

Let $K\subset\R^n$ be a convex body, let $B_K$ denote the centered Euclidean ball with $\vol_n(B_K)=\vol_n(K)$, and let $\G_{n,k}$ be the Grassmannian of $k$-dimensional linear subspaces of $\R^n$ equipped with its normalized rotation-invariant probability measure. For $p\ne 0$, Milman and Yehudayoff define the normalized $L^p$-moment quermassintegral
\begin{equation}\label{eq:I-def}
    I_{k,p}(K)
    =
    \left(
      \frac{\displaystyle\int_{\G_{n,k}} \vol_k(P_FK)^p\,dF}
           {\displaystyle\int_{\G_{n,k}} \vol_k(P_FB_K)^p\,dF}
    \right)^{1/p},
\end{equation}
where $P_F$ denotes orthogonal projection onto $F\in \G_{n,k}$.

The problem considered here originates in Lutwak's affine quermassintegral
program from the 1980s.  In the classical theory, the quermassintegrals, or intrinsic volumes, are obtained from Steiner's formula, and Kubota's formula expresses them as averages of projection volumes.  Together with the Alexandrov--Fenchel inequalities, this gives the standard monotone chain in the Euclidean setting; see \cite{BonnesenFenchel,Schneider}.  But these classical quantities are not invariant under general volume-preserving affine maps.  Lutwak's idea was to replace the usual $L^1$ average of projection volumes by the critical negative average, the $L^{-n}$ average, and so to define affine quermassintegrals \cite{Lutwak1984,Lutwak1993}.  Grinberg proved the affine invariance of these quantities \cite{Grinberg1991}.  The endpoint question $p=-n$, which is the one used in this note, was already conjectured in Lutwak's 1988 paper \cite{Lutwak1988}; it is also listed in Gardner's book \cite[Problem 9.5]{Gardner}.   The cases $k=1$ and $k=n-1$ are respectively the Blaschke--Santal\'o inequality and Petty's projection inequality \cite{Blaschke1923,Santalo1949,MeyerPajor1990,LutwakZhang1997,Petty1971,Petty1985,LutwakYangZhang2000}.  Milman and Yehudayoff later gave a broader $L^p$-moment formulation, proved the sharp affine quermassintegral minimization theorem, and stated the full monotonicity chain as Conjecture 1.6 \cite{MilmanYehudayoff}.

The conjectural affine Alexandrov--Fenchel chain is
\begin{equation}\label{eq:conj-chain}
    I_{1,p}(K)\ge I_{2,p}(K)^{1/2}\ge \cdots \ge
    I_{k,p}(K)^{1/k}\ge\cdots\ge I_{n,p}(K)^{1/n}=1,
    \qquad -n\le p< 0.
\end{equation}
This is precisely Conjecture 1.6 of Milman and Yehudayoff \cite{MilmanYehudayoff}.  They prove \eqref{eq:conj-chain} in the range
\[
        I_{\ell,p}(K)^{1/\ell}\ge I_{r,p}(K)^{1/r}
        \qquad\text{whenever}\qquad r\ge -p,
\]
and identify the remaining range $1\le \ell<r<-p$ as the open part of the problem.

In this paper we identify a local obstruction to the conjectured monotonicity.
It occurs at the endpoint \(p=-n\), in the comparison between arbitrary fixed
projection dimensions \(m<k\), whenever \(n>(m+2)(k+2)-2\).

The proof is local around the Euclidean ball. We take a small support-function perturbation $h_t=1+tY$, where $Y$ is an even spherical harmonic of degree four. Let \(K_t\) be the convex body with support function \(h_t\). For each subspace $F\in\G_{n,j}$, the projection $P_FK_t$ has support function $h_t|_{\mathbb{S}^{n-1}\cap F}$, so the standard surface-area formula gives a uniform second-order expansion of $\vol_j(P_FK_t)$. After the global volume normalization, we expand the negative moment $\int_{\G_{n,j}}\left(\frac{V_j(F,t)}{V_n(t)^{j/n}}\right)^{-n}\,dF$ and hence $\log I_{j,-n}(K_t)^{1/j}$. The second variation separates into two pieces: a universal Bochner/volume term depending only on the spherical Laplace eigenvalue of $Y$, and a genuinely Grassmannian term involving the square of the projection Radon average $R_jY$. The argument uses this projection-volume second variation to test the
cross-dimensional affine inequality directly. The remaining part is an
explicit computation of the relevant Grassmannian averages for the fourth
zonal harmonic in arbitrary projection dimension. This yields the coefficient of \(\|Y\|_2^2t^2\)
\[
    \frac{3(n+4)(k-m)\big((m+2)(k+2)-n-2\big)}
         {2(n-1)(m+2)(k+2)},
\]
which is negative when \(n>(m+2)(k+2)-2\) and therefore gives the desired strict
reverse inequality.

\begin{theorem}\label{thm:main}
For every triple of integers $m,k,n$ satisfying $1\leq m<k\le n-1$ and
$n>(m+2)(k+2)-2$, there exists an origin-symmetric $C^2_+$ convex body
$K\subset\R^n$ such that
\begin{equation}\label{eq:main-ineq}
        I_{m,-n}(K)^{1/m} < I_{k,-n}(K)^{1/k}.
\end{equation}
Consequently, the chain \eqref{eq:conj-chain} does not hold in full generality at \(p=-n\).
\end{theorem}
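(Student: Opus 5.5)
We perturb the Euclidean ball. Fix an even spherical harmonic $Y$ of degree four on $\mathbb S^{n-1}$; concretely, the zonal harmonic $Y(x)=P_4(\langle x,e_n\rangle)$. Set $h_t=1+tY$. For $|t|$ small, $h_t$ is the support function of an origin-symmetric $C^2_+$ body $K_t$, since $h_t+\nabla^2 h_t$ stays positive definite. Because $I_{j,-n}$ is scale invariant, we work with
\[
\Phi_j(t)=\frac1j\log\int_{\G_{n,j}}\Big(\frac{V_j(F,t)}{V_n(t)^{j/n}}\Big)^{-n}dF,
\]
up to additive constants and the sign convention, where $V_j(F,t)=\vol_j(P_FK_t)$ and $V_n(t)=\vol_n(K_t)$. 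At $t=0$ we have $\Phi_m=\Phi_k$. The first variation vanishes because $Y$ is orthogonal to constants, and the Radon averages $R_jY$ also have mean zero on $\G_{n,j}$. It therefore suffices to show that the $t^2$ coefficient of $\log I_{m,-n}(K_t)^{1/m}-\log I_{k,-n}(K_t)^{1/k}$ is strictly negative. Then $K=K_t$ for small $t\neq 0$ gives \eqref{eq:main-ineq}.

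\textbf{Step 1: second-order expansion of projection volumes.} For $F\in\G_{n,j}$, $P_FK_t$ has support function $h_t|_{S_F}$ with $S_F=\mathbb S^{n-1}\cap F$. The mixed-volume formula $\vol_j=\frac1j\int h\det(h_{ab}+h\delta_{ab})$, expanded to order two, gives
\[
\frac{V_j(F,t)}{\kappa_j}=1+t\,j\,R_jY(F)+\frac{t^2}{2}\,\frac{j(j-1)}{|S_F|}\int_{S_F}\Big(Y^2-\tfrac{1}{j-1}|\nabla_{S_F}Y|^2\Big)+O(t^3),
\]
where $R_jY(F)$ is the average of $Y$ over $S_F$. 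The $O(t^3)$ term is uniform in $F$ by compactness. Taking $j=n$ gives the global volume $V_n(t)$, with quadratic term $\frac{n(n-1)}2\big(1-\frac{\lambda}{n-1}\big)\|Y\|_2^2$ and $\lambda=4(n+2)$. Averaging the quadratic term in $F$ and using invariance, the average of $\int_{S_F}Y^2$ is $\|Y\|_2^2$. The average of $\int_{S_F}|\nabla_{S_F}Y|^2$ is $\frac{j-1}{n-1}\lambda\|Y\|_2^2$, because the tangential gradient projected to a random $(j-1)$-plane keeps a $\frac{j-1}{n-1}$ fraction on average. This yields the universal Bochner/volume term.

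\textbf{Step 2: expanding the negative moment.} Write $\log(V_j/(\kappa_jV_n^{j/n}))=tA_j+t^2B_j+O(t^3)$ with $A_j=jR_jY$. Then
\[
\frac1j\log\int e^{-n(tA_j+t^2B_j)}=\frac{t^2}{j}\Big(-n\textstyle\int B_j+\frac{n^2}{2}\Big(\int A_j^2-\big(\int A_j\big)^2\Big)\Big)+O(t^3),
\]
and $\int A_j=0$. The $\log$ step contributes $-\frac12A_j^2$ to $B_j$, so the quadratic coefficient becomes $\alpha_j\|Y\|_2^2+\beta_j\,j\int_{\G_{n,j}}(R_jY)^2dF$. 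Here $\alpha_j,\beta_j$ are explicit, with $\beta_j\approx\frac{n(n+1)}{2}\cdot j$-dependent factors. These come from the universal part and from the $R_jY$ part after dividing by $j$.

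\textbf{Step 3, the main obstacle: the Grassmannian average $\int(R_jY)^2$.} By Funk–Hecke on the Grassmannian, $R_j$ acts on degree-four harmonics by a multiplier $\mu_{j}$, meaning that the spherical function $F\mapsto R_jY(F)$ has $L^2$ norm $c_{n,j}\|Y\|_2^2$. Evaluate the multiplier on the zonal $Y=P_4(x_n)$. Its $S_F$-average depends only on $s=|P_Fe_n|^2$, and equals a quadratic polynomial in $s$ obtained from the moments $\int_{S_F}x_n^2$ and $\int_{S_F}x_n^4$ (the uniform sphere moments, $\frac{s}{j}$ and $\frac{3s^2}{j(j+2)}$). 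Then integrate the square against the Beta$\big(\frac j2,\frac{n-j}2\big)$ law of $s$, using moments up to $\mathbb E s^4$. I expect this step to be the main computational work. The target is to verify that the $t^2$ coefficient of $\Phi_m-\Phi_k$ equals
\[
-\frac{3(n+4)(k-m)\big((m+2)(k+2)-n-2\big)}{2(n-1)(m+2)(k+2)}\|Y\|_2^2,
\]
with signs matching the paper's normalization. The $(j+2)$ factors arise from the fourth moment, which is a consistency check. This expression has the required sign exactly when $n>(m+2)(k+2)-2$, and this completes the proof.
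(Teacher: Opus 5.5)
Your proposal follows the paper's proof essentially step for step. The shared steps are: the per-subspace second-order expansion of $\vol_j(P_FK_t)$ from the support-function volume formula; the averaged gradient identity with factor $\frac{j-1}{n-1}\lambda$; the negative-moment expansion giving $\frac1j\int c_j-\frac{n+1}{2j}\int(jR_jY)^2$; and the computation of $\int(jR_jY)^2$ through the moments of $|P_Fe|^2$ up to order four, which the paper carries out to get $\frac{3j(n-j)(n-j+2)}{(j+2)(n-1)(n+1)}\|Y\|_2^2$ and hence $A_j=\frac{3(n+4)(n-j)(j+1)}{2(j+2)(n-1)}\|Y\|_2^2$. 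The only slip is cosmetic: your $\Phi_j$ equals $-n\log I_{j,-n}(K_t)^{1/j}$ up to a constant, so the $t^2$ coefficient of $\Phi_m-\Phi_k$ is $n$ times the expression you display, which does not affect the sign.
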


The high-dimensional obstruction leaves open the low-dimensional endpoint.  In
three dimensions the full endpoint chain is in fact true.

\begin{theorem}\label{thm:dim3-chain}
For every convex body \(K\subset\R^3\) with non-empty interior,
\begin{equation}\label{eq:dim3-chain-intro}
        I_{1,-3}(K)\ge I_{2,-3}(K)^{1/2}\ge I_{3,-3}(K)^{1/3}=1.
\end{equation}
Equality in either of the two non-trivial inequalities occurs if and only if
\(K\) is an ellipsoid, up to translation and a non-singular affine map.
\end{theorem}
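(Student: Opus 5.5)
In dimension three only \(k=1,2,3\) occur, and the chain splits into two known-type pieces plus one genuinely new one. The right-hand inequality \(I_{2,-3}(K)^{1/2}\ge I_{3,-3}(K)^{1/3}=1\) is the case \(r=n=3\ge -p=3\) of the Milman--Yehudayoff range quoted above. In fact, for \(k=n-1=2\) it is Petty's projection inequality rewritten: Cauchy's formula identifies \(\vol_2(P_{u^\perp}K)\) with the support function of the projection body \(\Pi K\). Hence \(\int_{G_{3,2}}\vol_2(P_FK)^{-3}\,dF\) is a multiple of \(\vol_3(\Pi^*K)\), and Petty's inequality, with its equality case, gives that step with equality exactly for ellipsoids. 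Likewise \(I_{1,-3}\ge 1\) is Blaschke--Santal\'o, since \(\vol_1(P_uK)=h_K(u)+h_K(-u)\) and the \(-3\) moment of the width is a multiple of the volume of the polar of the difference body. That only gives \(I_1\ge 1\), however, and the real content is the middle inequality
\[
I_{1,-3}(K)^2\ge I_{2,-3}(K).
\]

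My plan is to write both sides as volumes of polar bodies built from \(K\). Let \(DK=K-K\), so that \(\vol_1(P_uK)=h_{DK}(u)\). Then
\[
\int_{S^2}h_{DK}^{-3}\,du=3\vol_3\big((DK)^\circ\big), \qquad \int_{S^2}h_{\Pi K}^{-3}\,du=3\vol_3\big(\Pi^{*}K\big).
\]
After the normalizations in \eqref{eq:I-def}, and because the ratios are affine invariant, the target becomes an inequality of the form
\[
\vol_3\big((DK)^\circ\big)^{-2}\,\vol_3(K)^{c}\;\le\; C\,\vol_3(\Pi^*K)^{-1},
\]
with exponents fixed by homogeneity. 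Equivalently it reads \(\vol_3(\Pi^*K)\le C'\vol_3((DK)^\circ)^2\vol_3(K)^{c'}\). I would try to prove it by comparing \(\Pi^*K\) with \((DK)^\circ\). In \(\R^3\) there are classical inclusions between the projection body of \(K\) and the polar of the difference body of \(K^\circ\)-type objects, and the section/projection duality of Lutwak between \(\Pi\) and the intersection body \(I\) is also available.

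The most promising concrete route is the following. First, the inequality only involves \(DK\) on the left and the projection body is unchanged by central symmetrization up to a factor, since \(\Pi K=\tfrac12\Pi(DK)\) in the sense of mixed areas only as an inequality. So I would first reduce to origin-symmetric \(K\): replacing \(K\) by \(\tfrac12 DK\) keeps \(I_1\), increases \(\vol_3\) by Rogers--Shephard-type comparisons in the wrong direction, and this needs care. Second, for symmetric \(K\) in \(\R^3\), I would use that \(\Pi\) maps \(K\) to a zonoid. I would then apply the Busemann--Petty centroid / Lutwak inequality \(\vol(\Pi^*K)\vol(K)^{2}\le\) (ellipsoid value), combined with Blaschke--Santal\'o for \(K\) itself, \(\vol(K^\circ)\vol(K)\le\) ellipsoid value. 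This chains \(\vol(\Pi^*K)\) through \(\vol(K^\circ)\), and in dimension three the exponents \(n-1=2\) and \(1\) should match exactly the square in \(I_1^2\). That numerology \(2=n-1\) is what I expect makes \(n=3\) special.

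The main obstacle is getting the correct direction of the comparison between \(\vol(\Pi^*K)\) and \(\vol(K^\circ)^2\) with matching powers of \(\vol(K)\). Blaschke--Santal\'o and Petty both bound their polar volumes from above, and chaining two upper bounds does not produce an inequality between them. I would therefore first try to find a direct pointwise or Hölder-type inequality on the sphere: express \(h_{\Pi K}(u)\) as an average of widths \(h_{DK}\) over the great circle \(u^\perp\), via the Cauchy/Crofton formula in the plane, \(\vol_2(P_{u^\perp}K)\ge\) something like \(\frac{1}{\pi}(\text{mean width})^2\cdot\)(isoperimetric correction). I would then apply Jensen to \(t\mapsto t^{-3}\) and the spherical Radon transform, followed by an isoperimetric step in each plane. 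Equality analysis would follow from equality in the planar isoperimetric/Petty step, forcing all 2-dimensional projections to be ellipses and hence \(K\) an ellipsoid.
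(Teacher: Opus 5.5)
Your handling of the right-hand inequality is fine: for $k=n-1=2$ it is Petty's projection inequality, with equality exactly for ellipsoids. The gap is in the first comparison $I_{1,-3}(K)\ge I_{2,-3}(K)^{1/2}$, and it starts with the target itself.

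Write $w_K(u)=\vol_1(P_{\R u}K)$ and $b_K(u)=\vol_2(P_{u^\perp}K)$. Unwinding \eqref{eq:I-def} gives $I_{1,-3}(K)=\frac{1}{2r_K}\bigl(\int w_K^{-3}\,d\sigma\bigr)^{-1/3}$ and $I_{2,-3}(K)^{1/2}=\frac{1}{\sqrt{\pi}\,r_K}\bigl(\int b_K^{-3}\,d\sigma\bigr)^{-1/6}$. So $r_K$ cancels (your exponent $c$ is $0$). Because of the negative exponents, the comparison is the \emph{lower} bound $\vol_3((\Pi K)^\circ)\ge\frac{48}{\pi^4}\vol_3((DK)^\circ)^2$. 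You wrote the reverse, $\vol_3((\Pi K)^\circ)\le C'\vol_3((DK)^\circ)^2$, and with the constant forced by the ball this is false. For $K=[-1,1]^3$ one has $(\Pi K)^\circ=\{\|x\|_1\le 1/4\}$ and $(DK)^\circ=\{\|x\|_1\le 1/2\}$. The left side is then $1/48$, while $\frac{48}{\pi^4}\cdot\frac{1}{36}=\frac{4}{3\pi^4}\approx 0.0137$.

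With the correct direction, the symmetrization step you worried about is easy and involves no Rogers--Shephard issue. Planar Brunn--Minkowski applied to $P_{u^\perp}K$ gives $\vol_2(P_{u^\perp}\tfrac12 DK)\ge \vol_2(P_{u^\perp}K)$, i.e.\ $\Pi K\subseteq \Pi L$ with $L=\frac12 DK$, while $DL=DK$. So it suffices to treat origin-symmetric $L$.

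Even then, neither of your routes supplies the core estimate. Petty plus Blaschke--Santal\'o gives only two upper bounds, as you note yourself. Your fallback is built for the reversed, false inequality. Both of its steps yield upper bounds for $\int b^{-3}$: a lower bound for $b$ via mean width, and Jensen for the convex function $t\mapsto t^{-3}$ applied to an averaged expression. What is actually needed is an \emph{upper} bound for $\vol_2(P_{u^\perp}L)$ in terms of data on the great circle $u^\perp$. Mean width points the other way: Urysohn gives $\vol_2(A)\le\frac{\pi}{4}\bar w(A)^2$, and segments rule out any reverse bound.

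More fundamentally, the right-hand side is the square $\bigl(\int w_K^{-3}\bigr)^2\propto \vol_3(L^\circ)^2$. A linear averaging argument over great circles cannot produce such a square. The missing ingredients, as in the paper, are:
\begin{itemize}
\item the polarity $P_{u^\perp}L=(L^\circ\cap u^\perp)^\circ$ (polar taken in $u^\perp$), which turns projections of $L$ into polars of central sections of $M=L^\circ$;
\item the sharp planar estimate $D(A)=\int_A\int_A|\det(x,y)|\,dx\,dy\le \frac{8\pi^4}{9}\vol_2(A^\circ)^{-3}$ for origin-symmetric $A$, proved via the centroid body, the planar identity $\Pi C=2RC$, Minkowski's inequality and planar Santal\'o;
\item the Blaschke--Petkantschin formula $\int_{\mathbb S^2}D(M\cap u^\perp)\,d\omega(u)=2\vol_3(M)^2$, which is exactly what creates the square.
\end{itemize}

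Your equality analysis inherits the gap. In the paper, equality first forces equality in Brunn--Minkowski for every projection, so $K$ is centrally symmetric after translation. It then forces equality in the planar lemma for almost every, hence every, central section of $K^\circ$. Busemann's characterization then shows $K^\circ$, and hence $K$, is an ellipsoid.
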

The proof of the three-dimensional endpoint chain is rather different from
the local perturbation argument used for the counterexamples.  The main point
is that, in dimension three, the first comparison can be reduced to a planar
inequality after taking the difference body and then passing to the polar
body.  More precisely, the projections of the difference body are identified
with the polars of central sections of its polar body.  We then use a sharp
two-dimensional determinant estimate, whose proof relies on centroid bodies,
the special planar identity between projection bodies and rotated bodies,
Minkowski's inequality, and the Blaschke--Santaló inequality.  Finally, the
Blaschke--Petkantschin formula turns this estimate on planar sections into
the desired three-dimensional inequality.  This is the main reason why the
three-dimensional case can be proved by a global argument, rather than by the
local second-variation method.

We now describe the local construction used in the proof of
Theorem~\ref{thm:main}, starting with the choice of the perturbation.  Since the quantities
\(I_{j,-n}\) are invariant under volume-preserving affine transformations,
the second variation cannot detect a genuine instability in the quadratic
harmonic directions: such directions are precisely the infinitesimal
deformations of the ball into ellipsoids, and along ellipsoids the affine
normalization makes the relevant quantities constant.  The constant mode is
removed by the volume normalization, while odd harmonics would destroy the
origin-symmetry that we keep throughout the construction.  Thus the first
possible even perturbations which are not generated by affine deformations
occur in degree \(4\).  For this reason we take a fourth-order spherical
harmonic.  The particular zonal choice
\begin{equation}\label{eq:Y-def}
    Y(u)
    = u_1^4-\frac{6}{n+4}u_1^2+\frac{3}{(n+2)(n+4)},
    \qquad u\in \mathbb{S}^{n-1}
\end{equation}
is only for convenience: it is the simplest degree-four harmonic depending
on one coordinate, and it reduces all Grassmannian averages to elementary
one-variable moment computations.  In this sense the instability found below
is the first non-affine, origin-symmetric direction visible in the local
spectral expansion at the Euclidean ball.
For sufficiently small $t\in\R$, the function
\begin{equation}\label{eq:h-def}
        h_t(u)=1+tY(u),\qquad u\in \mathbb{S}^{n-1},
\end{equation}
is the support function of an origin-symmetric $C^2_+$ convex body $K_t$. For the fixed triple $m,k,n$ in Theorem \ref{thm:main}, we prove that
\begin{equation}\label{eq:second-variation-final}
    \log \frac{I_{m,-n}(K_t)^{1/m}}{I_{k,-n}(K_t)^{1/k}}
    =
    \frac{3(n+4)(k-m)\big((m+2)(k+2)-n-2\big)}
         {2(n-1)(m+2)(k+2)}\,
      \|Y\|_{2}^2\,t^2
      +O(t^3).
\end{equation}
The coefficient of $t^2$ is negative when $n>(m+2)(k+2)-2$, which proves the theorem.

The remainder of the paper is organized as follows. 
In Section \ref{pre}, we introduce the notation and collect the basic facts used throughout the paper. 
In Section \ref{sec:second-variation}, we derive the second-variation formula for the normalized negative projection-volume moments under small support-function perturbations of the Euclidean ball. 
In Section \ref{sec:fourth-harmonic}, we specialize to a fourth zonal spherical harmonic and compute the Grassmannian averages needed for arbitrary projection dimensions. 
In Section \ref{sec:proof}, we combine these computations to prove Theorem \ref{thm:main}; the resulting second-order coefficient is negative when \(n>(m+2)(k+2)-2\), which proves the asserted reverse inequality.
In Section \ref{sec:dim3-chain}, we prove Theorem \ref{thm:dim3-chain}.  In the final section, we also show that the three-dimensional argument can be adapted to prove the first endpoint comparison in dimension four.
\section{Preliminaries}\label{pre}

All measures on spheres and Grassmannians are normalized to have total mass one; all integrals over these spaces use the normalized measures unless an unnormalized surface element $d\omega$ is explicitly displayed. We write $\kappa_j=\vol_j(B_2^j)$ where $B^j_2$ denotes the unit ball in $\mathbb{R}^j$ with respect to the $\ell^2$-norm, and
\[
    S_F=\mathbb{S}^{n-1}\cap F,\qquad F\in\G_{n,j}.
\]
By $C^2_+$ we mean that the support function is $C^2$ and that the curvature matrix $\nabla^2h+hg$ is positive definite.
We fix $e_1=(1,0,\ldots,0)\in\R^n$ and write $y_1=\langle y,e_1\rangle$ for the first coordinate of a vector $y\in\R^n$. Thus $u_1$ is the first coordinate of the integration variable $u\in \mathbb{S}^{n-1}$, and $x_1$ is the first coordinate of the integration variable $x\in S_F$.
For an integrable function $f$ on $\mathbb{S}^{n-1}$ define its spherical Radon average over $F$ by
\begin{equation}\label{eq:Rj}
    R_jf(F)=\int_{S_F} f(u)\,d\sigma_F(u),
\end{equation}
where $\sigma_F$ is the normalized measure on $S_F$.  When \(j=1\), \(S_F\)
consists of two antipodal points and all expressions involving the intrinsic
gradient on \(S_F\) are interpreted as zero.  If $K$ has support function
$h_K$, then $P_FK$ has support function $h_K|_{S_F}$.

For $j\ge2$, if $h$ is a $C^2$ support function on $\mathbb{S}^{j-1}$ whose curvature matrix $\nabla^2h+h g$ is positive definite, then the volume of the corresponding $j$-dimensional convex body is
\begin{equation}\label{eq:support-volume}
    \vol_j(L)=\frac1j\int_{\mathbb{S}^{j-1}} h\det(\nabla^2 h+h g)\,d\omega.
\end{equation}
Here $g$ is the round metric, $\nabla$ is its Levi-Civita connection, and $d\omega$ is the usual surface measure. Formula \eqref{eq:support-volume} is the standard support-function expression for mixed volume with all arguments equal to $L$; for the perturbative calculation below, it may also be taken as a direct local parametrization formula for the boundary.

We use the sign convention
\[
        -\Delta_{\mathbb{S}^{n-1}}Y=\lambda Y,
        \qquad \int_{\mathbb{S}^{n-1}}Y^2\,d\sigma=\|Y\|_2^2.
\]

All Laplacians below are Laplace--Beltrami operators with the convention
$
\Delta=\operatorname{div}\nabla
$, so that the spectrum on the unit sphere is non-positive.

We shall use the following integrated Bochner identity on the unit sphere. It is a
standard consequence of the Bochner--Weitzenbock formula; we include the short
proof in order to make the perturbation calculation self-contained. See, for
example, \cite[Chapter 7]{Petersen} for the general Riemannian identity.

\begin{lemma}[Integrated Bochner identity on the sphere]\label{lem:bochner}
Let $f\in C^2(\mathbb{S}^{d})$, $d\ge 1$. Then
\begin{equation}\label{eq:bochner-integrated}
    \int_{\mathbb{S}^{d}}\big((\Delta f)^2-|\nabla^2f|^2\big)\,d\omega
    =(d-1)\int_{\mathbb{S}^{d}}|\nabla f|^2\,d\omega .
\end{equation}
Here $\nabla^2f$ is the covariant Hessian and all norms are taken with respect to
the round metric on $\mathbb{S}^{d}$.
\end{lemma}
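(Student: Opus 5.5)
The plan is to integrate the pointwise Bochner--Weitzenb\"ock formula over the sphere. For $f\in C^3$ on a Riemannian manifold $(M,g)$ the formula reads
\[
    \tfrac12\Delta|\nabla f|^2=|\nabla^2 f|^2+\langle\nabla f,\nabla\Delta f\rangle+\operatorname{Ric}(\nabla f,\nabla f).
\]
On the round sphere $\mathbb{S}^d$ we have $\operatorname{Ric}=(d-1)g$. Integrating over the closed manifold $\mathbb{S}^d$, the left side vanishes by the divergence theorem. The term $\int\langle\nabla f,\nabla\Delta f\rangle\,d\omega$ equals $-\int(\Delta f)^2\,d\omega$ after one integration by parts. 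Rearranging then gives
\[
    \int\big((\Delta f)^2-|\nabla^2 f|^2\big)\,d\omega=(d-1)\int|\nabla f|^2\,d\omega,
\]
which is exactly \eqref{eq:bochner-integrated}.

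To make this self-contained, I would verify the pointwise formula via the Ricci commutation identity. In local coordinates, $\nabla_j\nabla_i\nabla_i f-\nabla_i\nabla_j\nabla_i f$ gives the curvature term, since $\nabla_i\nabla_j\nabla_i f=\nabla_j\nabla_i\nabla_i f+R_{jl}\nabla_l f$. Then I would write
\[
    \tfrac12\Delta|\nabla f|^2=\nabla_i(\nabla_j f\,\nabla_i\nabla_j f)=|\nabla^2 f|^2+\nabla_j f\,\nabla_i\nabla_i\nabla_j f
\]
and commute derivatives to obtain the formula. Alternatively, one can avoid the pointwise formula and integrate by parts directly:
\[
    \int|\nabla^2f|^2=-\int\nabla_j f\,\nabla_i\nabla_i\nabla_j f=-\int\nabla_j f\big(\nabla_j\Delta f+R_{jl}\nabla_l f\big)=\int(\Delta f)^2-(d-1)\int|\nabla f|^2.
\]
This is the cleanest route, and it is the one I would write.

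The main obstacle is regularity. The statement assumes only $f\in C^2$, while the computation uses third derivatives. I would first prove the identity for smooth $f$ and then approximate. Take $f_\varepsilon\to f$ in $C^2(\mathbb{S}^d)$, for instance by convolution with a smooth approximate identity on $SO(d+1)$, or by truncated spherical harmonic expansions combined with mollification. Both sides of \eqref{eq:bochner-integrated} involve only derivatives up to order two, so both sides are continuous in the $C^2$ topology, and the identity passes to the limit.

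The case $d=1$ needs a separate check. There both sides vanish trivially, because $\Delta f=f''=\nabla^2 f$ on the circle, so $(\Delta f)^2=|\nabla^2 f|^2$, and the right-hand side has the factor $d-1=0$. Finally, I would confirm the sign conventions against the paper's convention $\Delta=\operatorname{div}\nabla$; the identity is insensitive to the sign of $\Delta$ anyway, since $\Delta$ appears squared.
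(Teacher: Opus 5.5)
Your proposal is correct and is essentially the paper's proof. Like the paper, you integrate the Bochner formula with $\operatorname{Ric}=(d-1)g$, integrate $\langle\nabla f,\nabla\Delta f\rangle$ by parts, and pass from smooth $f$ to $C^2$ by approximation; your preferred direct integration-by-parts version is the integrated form of the same Ricci commutation identity.
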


\begin{proof}
It is enough to prove the identity for smooth $f$; the stated $C^2$ case follows by approximation in $C^2$ on the compact sphere. The pointwise Bochner formula is
\[
    \frac12\Delta |\nabla f|^2
    =\langle\nabla f,\nabla\Delta f\rangle
      +|\nabla^2 f|^2+\operatorname{Ric}(\nabla f,\nabla f).
\]
On the unit sphere $\mathbb{S}^{d}$, $\operatorname{Ric}=(d-1)g$. Integrating over the
closed manifold $\mathbb{S}^{d}$ eliminates the left-hand side. Moreover,
\[
    \int_{\mathbb{S}^{d}}\langle\nabla f,\nabla\Delta f\rangle\,d\omega
    =-\int_{\mathbb{S}^{d}}(\Delta f)^2\,d\omega
\]
by integration by parts. Therefore
\[
    0=-\int_{\mathbb{S}^{d}}(\Delta f)^2\,d\omega
      +\int_{\mathbb{S}^{d}}|\nabla^2 f|^2\,d\omega
      +(d-1)\int_{\mathbb{S}^{d}}|\nabla f|^2\,d\omega,
\]
which is \eqref{eq:bochner-integrated}.
\end{proof}

\section{A second-variation formula}
\label{sec:second-variation}

We first fix the meaning of the error notation used in this section. If
$Q(F,t)$ is defined for $F\in\G_{n,j}$ and $|t|$ small, then
\[
        Q(F,t)=O(t^3) \quad\text{uniformly in }F
\]
means that there exist constants $\varepsilon>0$ and $C<\infty$, depending only
on the fixed dimension, on $j$, and on the perturbing function through a fixed
$C^2$ bound, such that
\[
        \sup_{F\in\G_{n,j}} |Q(F,t)|\le C|t|^3,
        \qquad |t|\le\varepsilon .
\]
For scalar quantities with no Grassmannian parameter, such as the final
expansion of $\log \frac{I_{m,-n}(K_t)^{1/m}}{I_{k,-n}(K_t)^{1/k}}$, the same notation means the
corresponding one-variable bound $|Q(t)|\le C|t|^3$ for all sufficiently small
$|t|$. In particular, once the quadratic coefficient is nonzero, the remainder
cannot affect its sign for all sufficiently small nonzero $t$.

The next lemma records the projection-volume expansion used throughout the proof. The term $|\nabla_F f|$ denotes the intrinsic spherical gradient of $f|_{S_F}$ on $S_F$. With the convention made above for \(j=1\), the second-order term is zero; indeed the projection is then an interval and the formula is immediate.

\begin{lemma}\label{lem:projection-expansion}
Let $f\in C^2(\mathbb{S}^{n-1})$ and let $K_t$ be the body with support function $h_t=1+tf$, for $|t|$ sufficiently small. Then, uniformly in $F\in\G_{n,j}$,
\begin{equation}\label{eq:projection-expansion}
    \frac{\vol_j(P_FK_t)}{\kappa_j}
    =
    1+jR_jf(F)t
    +\frac{j}{2}R_j\big((j-1)f^2-|\nabla_Ff|^2\big)(F)t^2
    +O(t^3).
\end{equation}
\end{lemma}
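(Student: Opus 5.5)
The plan is to apply the support-function volume formula \eqref{eq:support-volume} on the $(j-1)$-sphere $S_F\cong\mathbb{S}^{j-1}$ to the restricted support function $h_t|_{S_F}=1+tf|_{S_F}$, expand the integrand in powers of $t$, and simplify with Lemma~\ref{lem:bochner}. The case $j=1$ is handled first and separately. There $P_FK_t$ is the segment of length $h_t(v)+h_t(-v)$ for $F=\mathrm{span}\{v\}$, so $\vol_1(P_FK_t)/\kappa_1=1+tR_1f(F)$ exactly. This agrees with \eqref{eq:projection-expansion}, since the quadratic coefficient $\tfrac12R_1(0\cdot f^2-0)$ vanishes under the stated convention.

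For $j\ge2$, write $\phi=f|_{S_F}$, let $\nabla,\Delta$ denote the intrinsic operators on $S_F$, and write $g$ for the round metric there. The curvature matrix is $g+tA$, where $A=\nabla^2\phi+\phi g$. For $|t|$ small it is positive definite, uniformly in $F$, because $\|\nabla^2\phi\|\le C\|f\|_{C^2}$ and the $C^2$ norm of a restriction is controlled by that of $f$. So \eqref{eq:support-volume} applies. Expanding the determinant gives
\[
\det(g+tA)=1+t\operatorname{tr}A+\tfrac{t^2}{2}\big((\operatorname{tr}A)^2-|A|^2\big)+O(t^3),
\]
with the remainder bounded uniformly by a polynomial in $\|f\|_{C^2}$. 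Multiplying by $h_t=1+t\phi$ and integrating over $S_F$ against $d\omega$, whose total mass is $j\kappa_j$, we obtain the following.

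The first-order coefficient is $\frac1j\int(\phi+\operatorname{tr}A)=\frac1j\int(j\phi+\Delta\phi)=\int\phi$, since $\int\Delta\phi=0$. After dividing by $\kappa_j$, this is $jR_jf(F)$.

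The second-order coefficient is
\[
\tfrac1j\int\Big(\phi\operatorname{tr}A+\tfrac12\big((\operatorname{tr}A)^2-|A|^2\big)\Big).
\]
Here $\operatorname{tr}A=\Delta\phi+(j-1)\phi$, and $|A|^2=|\nabla^2\phi|^2+2\phi\Delta\phi+(j-1)\phi^2$. Then
\[
(\operatorname{tr}A)^2-|A|^2=(\Delta\phi)^2-|\nabla^2\phi|^2+2(j-2)\phi\Delta\phi+(j-1)(j-2)\phi^2 .
\]
We apply Lemma~\ref{lem:bochner} on $\mathbb{S}^{j-1}$ with $d=j-1$, which turns $\int((\Delta\phi)^2-|\nabla^2\phi|^2)$ into $(j-2)\int|\nabla\phi|^2$. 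We also integrate by parts, $\int\phi\Delta\phi=-\int|\nabla\phi|^2$. Collecting terms, the second-order coefficient becomes
\[
\frac1j\int\Big(\big(j-1+\tfrac{(j-1)(j-2)}2\big)\phi^2-\big(1+\tfrac{j-2}{2}\big)|\nabla\phi|^2\Big)=\frac{1}{2}\int\big((j-1)\phi^2-|\nabla\phi|^2\big)\,d\omega .
\]
Dividing by $\kappa_j$ and converting to the normalized measure (factor $j\kappa_j$) yields $\frac j2R_j((j-1)f^2-|\nabla_Ff|^2)(F)$, as claimed.

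The main obstacle is not the algebra but the uniformity of the $O(t^3)$ remainder in $F$. For this I would note two things. First, the remainder in the determinant expansion and the cubic term from $h_t$ are polynomials in $t$ whose coefficients are bounded by powers of $\sup_F\|f|_{S_F}\|_{C^2(S_F)}\le C\|f\|_{C^2(\mathbb{S}^{n-1})}$; this holds because the intrinsic Hessian of the restriction to a totally geodesic subsphere is the restriction of the ambient Hessian. Second, the positivity threshold for $t$ depends only on this same bound. When $f$ is merely $C^2$, Lemma~\ref{lem:bochner} is still available, since it is stated for $C^2$ functions.
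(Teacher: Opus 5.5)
Your proposal is correct and follows essentially the same route as the paper. You treat $j=1$ separately; for $j\ge2$ you expand $\det(I+tA)$ with $A=\nabla^2\phi+\phi g$ on $S_F\cong\mathbb{S}^{j-1}$, then use integration by parts together with Lemma~\ref{lem:bochner} (with $d=j-1$) to reduce the quadratic term to $\frac12\int\big((j-1)\phi^2-|\nabla\phi|^2\big)\,d\omega$, and control the remainder by a uniform $C^2$ bound on the restrictions $f|_{S_F}$. Your justification of that uniform bound, via the vanishing second fundamental form of the totally geodesic subspheres $S_F$, is a slightly more explicit version of the paper's compactness argument on the incidence bundle.
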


\begin{proof}
When $j=1$, write $F=\operatorname{span}\{u\}$. Then $P_FK_t$ is the interval of length
$h_t(u)+h_t(-u)=2+t(f(u)+f(-u))$. Since $\kappa_1=2$ and
$R_1f(F)=\frac12(f(u)+f(-u))$, \eqref{eq:projection-expansion} holds with zero second-order term. Assume now $j\ge2$.

It is enough to prove the formula on a fixed $j$-dimensional Euclidean space.
Write $d=j-1$, identify the unit sphere in that space with $\mathbb{S}^{d}$, and put
$A=\nabla^2f+fg$. Then
\[
    \operatorname{tr}A=\Delta f+d f,
    \qquad
    |A|^2=|\nabla^2 f|^2+2f\Delta f+d f^2.
\]
The determinant expansion
\[
    \det(I+tA)
    =1+t\operatorname{tr}A+\frac{t^2}{2}
      \big((\operatorname{tr}A)^2-|A|^2\big)+O(t^3)
\]
and the support-function formula \eqref{eq:support-volume} yield
\begin{align*}
\frac{\vol_j(L_t)}{\kappa_j}
&=1+j\bar f\,t \\
&\quad +\frac1{\kappa_j j}\int_{\mathbb{S}^{d}}
\left\{
 f(\Delta f+df)
+\frac12\left((\Delta f+df)^2-|\nabla^2f+fg|^2\right)
\right\}
d\omega\,t^2
+O(t^3),
\end{align*}
where $\bar f$ is the normalized spherical average on $\mathbb{S}^d$. Expanding the
integrand in the second-order term gives
\[
\frac12\big((\Delta f)^2-|\nabla^2f|^2\big)
+d f\Delta f+\frac{d(d+1)}2 f^2.
\]
By integration by parts,
\[
    \int_{\mathbb{S}^d} f\Delta f\,d\omega=-\int_{\mathbb{S}^d}|\nabla f|^2\,d\omega,
\]
and by Lemma \ref{lem:bochner},
\[
    \int_{\mathbb{S}^d}\big((\Delta f)^2-|\nabla^2f|^2\big)\,d\omega
    =(d-1)\int_{\mathbb{S}^d}|\nabla f|^2\,d\omega.
\]
Thus the second-order integral equals
\[
    \frac{d+1}{2}\int_{\mathbb{S}^d}\big(d f^2-|\nabla f|^2\big)\,d\omega.
\]
Since $d+1=j$ and $|\mathbb{S}^d|=j\kappa_j$, after division by $j\kappa_j$ this becomes
\[
    \frac{j}{2}\int_{\mathbb{S}^d}\big((j-1)f^2-|\nabla f|^2\big)\,d\sigma_{\mathbb{S}^d}.
\]
Applying the same calculation to the restriction $f|_{S_F}$ gives
\eqref{eq:projection-expansion}. We now show the uniformity of the
remainder.

Let
\[
    \mathcal I_j=\{(F,u):F\in\G_{n,j},\ u\in S_F\}
\]
be the incidence bundle. It is compact. For $(F,u)\in\mathcal I_j$, put
\[
    f_F=f|_{S_F},\qquad
    A_F(u)=\nabla^2_{F}f_F(u)+f_F(u)g_F .
\]
Since $f\in C^2(\mathbb{S}^{n-1})$, the quantities $|f_F(u)|$ and $\|A_F(u)\|$ are
bounded on $\mathcal I_j$ by a constant $M_j$. This follows either by writing
the intrinsic covariant derivatives on $S_F$ in local frames or, invariantly, by
noting that the restriction map $(F,u)\mapsto f|_{S_F}$ depends continuously on
$(F,u)$ up to second derivatives. Hence, for $|t|\le (2M_j+2)^{-1}$, all
matrices $I+tA_F(u)$ have eigenvalues in a fixed compact interval and all
functions $1+t f_F(u)$ are bounded away from zero.

For any symmetric $d\times d$ matrix $A$ with $\|A\|\le M_j$, Taylor expansion of
the polynomial $A\mapsto \det(I+tA)$ gives the quantitative bound
\[
\left|
\det(I+tA)-1-t\operatorname{tr}A
-\frac{t^2}{2}\big((\operatorname{tr}A)^2-\operatorname{tr}(A^2)\big)
\right|
\le C_{j,M_j}|t|^3 .
\]
The constant is uniform in $A$ because the coefficients of the determinant are
elementary symmetric polynomials of the eigenvalues and are bounded on
$\{\|A\|\le M_j\}$. Multiplying by $1+t f_F(u)$ only changes the constant, since
$|f_F(u)|\le M_j$. Thus the integrand in the support-volume formula has a
third-order remainder bounded by $C|t|^3$ for every $(F,u)\in\mathcal I_j$.
Integration over $S_F$, followed by division by $j\kappa_j$ (equivalently, integration with normalized measure), preserves the same bound, and so
\eqref{eq:projection-expansion} holds with an $O(t^3)$ term uniform in
$F\in\G_{n,j}$.
\end{proof}

We now pass from the expansion of projected volumes to the expansion of the normalized quantities $I_{j,-n}$.

We shall also use the uniformity of the remainder through the elementary operations in the next proposition. In the setting of that proposition the perturbing harmonic has mean zero; hence Lemma \ref{lem:projection-expansion} gives $V_j(F,t)=1+O(t)$ uniformly in $F$, while the global volume satisfies $V_n(t)=1+O(t^2)$. After shrinking $\varepsilon$ we have
\[
        \frac12\le V_j(F,t)\le2,
        \qquad
        \frac12\le V_n(t)\le2
\]
for every $F\in\G_{n,j}$ and $|t|\le\varepsilon$. On the interval $[1/2,2]$, the third derivatives of $x\mapsto x^\alpha$, $x\mapsto x^{-n}$, and $x\mapsto \log x$ are bounded for every fixed exponent $\alpha$. Taylor's theorem therefore gives uniform third-order remainders when we form $V_n(t)^{-j/n}$, multiply by $V_j(F,t)$, raise the quotient to the power $-n$, integrate over $\G_{n,j}$, and finally take the logarithm. Thus every $O(t^3)$ term in Proposition \ref{prop:second-variation} is uniform in the precise sense fixed above.

\begin{proposition}\label{prop:second-variation}
Let $Y$ be a nonconstant spherical harmonic on $\mathbb{S}^{n-1}$ satisfying
\[
        -\Delta_{\mathbb{S}^{n-1}}Y=\lambda Y,
        \qquad \int_{\mathbb{S}^{n-1}}Y\,d\sigma=0.
\]
Let $K_t$ have support function $1+tY$. Then, for every $1\le j\le n$,
\begin{equation}\label{eq:Aj-formula}
    \log I_{j,-n}(K_t)^{1/j}
    =A_j(Y)t^2+O(t^3),
\end{equation}
where
\begin{equation}\label{eq:Aj}
    A_j(Y)
    =
    \frac{n-j}{2}\left(\frac{\lambda}{n-1}-1\right)\|Y\|_2^2
    -\frac{n+1}{2j}\int_{\G_{n,j}}\big(jR_jY(F)\big)^2\,dF.
\end{equation}
\end{proposition}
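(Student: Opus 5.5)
The plan is a direct Taylor expansion. Write
\[
V_j(F,t)=\frac{\vol_j(P_FK_t)}{\kappa_j},\qquad V_n(t)=\frac{\vol_n(K_t)}{\kappa_n}.
\]
Since $B_{K_t}$ is the centered ball of volume $\vol_n(K_t)$, every projection satisfies $\vol_j(P_FB_{K_t})=\kappa_jV_n(t)^{j/n}$, independently of $F$. The denominator in \eqref{eq:I-def} is therefore the constant $(\kappa_jV_n^{j/n})^{-n}$, and
\[
\log I_{j,-n}(K_t)^{1/j}=-\frac{1}{nj}\log\int_{\G_{n,j}}Q(F,t)^{-n}\,dF,\qquad Q=\frac{V_j(F,t)}{V_n(t)^{j/n}}.
\]
All remainders are uniform in $F$ by Lemma \ref{lem:projection-expansion} and the discussion preceding the proposition, so I only need to track the $t$ and $t^2$ coefficients.

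\textbf{Step 1: the two volume expansions.} Set $a=R_jY(F)$ and $b=R_j\big((j-1)Y^2-|\nabla_FY|^2\big)(F)$. Lemma \ref{lem:projection-expansion} gives
\[
V_j=1+jat+\tfrac{j}{2}bt^2+O(t^3).
\]
The case $j=n$ of the same lemma, using $\int Y\,d\sigma=0$ and $\int|\nabla Y|^2\,d\sigma=\lambda\|Y\|_2^2$ (integration by parts), gives
\[
V_n=1+\tfrac n2(n-1-\lambda)\|Y\|_2^2t^2+O(t^3).
\]
Taking logarithms,
\[
L:=\log Q=jat+\Big[\tfrac j2 b-\tfrac{j^2}{2}a^2-\tfrac j2(n-1-\lambda)\|Y\|_2^2\Big]t^2+O(t^3).
\]
Then I expand $Q^{-n}=e^{-nL}=1-nL+\tfrac{n^2}{2}L^2+O(t^3)$. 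The only surviving square is $n^2j^2a^2t^2/2$.

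\textbf{Step 2: Grassmannian averages.} By rotation invariance, $\int_{\G_{n,j}}R_jg\,dF=\int_{\mathbb{S}^{n-1}}g\,d\sigma$ for any integrable $g$. This kills the linear term, since $\int a\,dF=\int Y\,d\sigma=0$.

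The key point, and the one step needing care, is the average of $R_j|\nabla_FY|^2$. At a point $u\in S_F$, the tangent space of $S_F$ is $F\cap u^\perp$. When $F$ is Haar-random and we condition on $u\in F$, this is a uniformly random $(j-1)$-dimensional subspace of the $(n-1)$-dimensional space $u^\perp$. Hence, disintegrating the incidence measure over $u$,
\[
\int_{\G_{n,j}}R_j|\nabla_FY|^2\,dF=\frac{j-1}{n-1}\int_{\mathbb{S}^{n-1}}|\nabla Y|^2\,d\sigma=\frac{(j-1)\lambda}{n-1}\|Y\|_2^2 .
\]
Therefore $\int b\,dF=(j-1)\big(1-\tfrac{\lambda}{n-1}\big)\|Y\|_2^2$. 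For $j=1$ this is consistent with the convention that the gradient term vanishes.

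\textbf{Step 3: assembly.} Write $\mu=\lambda/(n-1)$ and note $n-1-\lambda=(n-1)(1-\mu)$. The $t^2$-coefficient of $\int Q^{-n}\,dF$ is
\[
-\tfrac{nj}{2}\Big[(j-1)(1-\mu)-(n-1)(1-\mu)\Big]\|Y\|_2^2+\tfrac{nj^2(n+1)}{2}\int a^2\,dF
\]
\[
=\tfrac{nj(n-j)}{2}(1-\mu)\|Y\|_2^2+\tfrac{nj^2(n+1)}{2}\int a^2\,dF .
\]
This integral is $1+O(t^2)$, so its logarithm has the same $t^2$-coefficient. Multiplying by $-1/(nj)$ gives
\[
\frac{n-j}{2}(\mu-1)\|Y\|_2^2-\frac{n+1}{2j}\int_{\G_{n,j}}(jR_jY)^2\,dF,
\]
which is exactly \eqref{eq:Aj}.

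The case $j=n$ is consistent: then $R_nY=0$ and the formula gives $A_n=0$, matching $I_{n,-n}\equiv1$. The only nonroutine ingredient is the conditional-uniformity claim for $F\cap u^\perp$ in Step 2, which follows from the $O(n)$-invariance of the incidence measure on $\mathcal I_j$.
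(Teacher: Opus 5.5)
Your proposal is correct and follows essentially the same route as the paper. You use the same volume-normalized quotient and the same expansions from Lemma \ref{lem:projection-expansion}, and you get $\int_{\G_{n,j}}R_j|\nabla_FY|^2\,dF=\frac{j-1}{n-1}\lambda\|Y\|_2^2$ from the same disintegration of the incidence measure. The only difference is cosmetic: you expand $\log Q$ and then $e^{-nL}$, while the paper Taylor-expands $W_j^{-n}$ directly, and both give the same $t^2$-coefficient.
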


\begin{proof}
Set
\[
    V_j(F,t)=\frac{\vol_j(P_FK_t)}{\kappa_j},
    \qquad
    V_n(t)=\frac{\vol_n(K_t)}{\kappa_n}.
\]
Since $B_{K_t}$ has radius $V_n(t)^{1/n}$, where $B_{K_t}$ denotes the centered Euclidean ball with the same volume as $K_t$, we have
\begin{equation}\label{eq:vj-def}
    I_{j,-n}(K_t)^{1/j}
    =
    \left(
       \int_{\G_{n,j}}
       \left(\frac{V_j(F,t)}{V_n(t)^{j/n}}\right)^{-n}dF
    \right)^{-1/(nj)}.
\end{equation}
By Lemma \ref{lem:projection-expansion},
\[
    V_j(F,t)=1+a_j(F)t+b_j(F)t^2+O(t^3),
\]
where
\[
    a_j(F)=jR_jY(F),
    \qquad
    b_j(F)=\frac{j}{2}R_j\big((j-1)Y^2-|\nabla_FY|^2\big)(F).
\]
For $j=n$, since $\int_{\mathbb{S}^{n-1}}Y\,d\sigma=0$ and $\int_{\mathbb{S}^{n-1}} |\nabla Y|^2 d\sigma=\lambda\|Y\|_2^2$,
\begin{equation}\label{eq:global-volume}
    V_n(t)=1+\frac{n}{2}\big((n-1)-\lambda\big)\|Y\|_2^2t^2+O(t^3).
\end{equation}
Consequently
\[
    \frac{V_j(F,t)}{V_n(t)^{j/n}}
    =1+a_j(F)t+c_j(F)t^2+O(t^3),
\]
with
\[
    c_j(F)=b_j(F)+\frac{j}{2}\big(\lambda-(n-1)\big)\|Y\|_2^2.
\]

The $O(t^3)$ in this normalized expansion is also uniform in $F$. Indeed,
Lemma \ref{lem:projection-expansion} gives
\[
   V_j(F,t)=1+a_j(F)t+b_j(F)t^2+t^3E_j(F,t),
   \qquad |E_j(F,t)|\le C
\]
for all $F$ and $|t|\le t_0$. The same argument with $j=n$ gives
\[
   V_n(t)=1+d_nt^2+t^3E_n(t),
   \qquad
   d_n=\frac n2((n-1)-\lambda)\|Y\|_2^2,
   \qquad |E_n(t)|\le C.
\]
After shrinking $t_0$ we may assume $1/2\le V_n(t)\le3/2$. Since the scalar
function $s\mapsto s^{-j/n}$ has bounded first three derivatives on
$[1/2,3/2]$, Taylor's theorem yields
\[
   V_n(t)^{-j/n}=1-\frac jn d_nt^2+t^3\widetilde E_n(t),
   \qquad |\widetilde E_n(t)|\le C.
\]
Multiplying the last display by the uniform expansion of $V_j(F,t)$ gives a
remainder $t^3\widetilde E_j(F,t)$ with $|\widetilde E_j(F,t)|\le C$ uniformly
in $F$.

We shall need the Grassmannian average of $c_j$. The identities
\begin{equation}\label{eq:Radon-average-identities}
    \int_{\G_{n,j}}R_j(Y^2)(F)\,dF=\|Y\|_2^2,
    \qquad
    \int_{\G_{n,j}}R_j(|\nabla_FY|^2)(F)\,dF
    =\frac{j-1}{n-1}\lambda\|Y\|_2^2
\end{equation}
are as follows.  The first one is just Fubini on the incidence relation
\(\{(F,u):u\in S_F\}\), whose induced marginal on \(\mathbb S^{n-1}\) is
\(\sigma\).  For the second identity, disintegrate the same flag integral by
first fixing $u\in \mathbb{S}^{n-1}$ and then integrating over those
$F\in\G_{n,j}$ with $u\in F$. On this fiber,
$T_uS_F=F\cap u^\perp$ ranges over $\G_{u^\perp,j-1}$ with its normalized
rotation-invariant probability measure, where $\G_{u^\perp,j-1}$ denotes the
Grassmannian of $(j-1)$-dimensional subspaces of the Euclidean space
$u^\perp$. Hence, for each tangent vector $v\in u^\perp$,
\[
    \int_{\G_{u^\perp,j-1}} |P_Ev|^2\,dE=\frac{j-1}{n-1}|v|^2,
\]
with the convention that the left-hand side is zero when \(j=1\).  Taking
\(v=\nabla_{\mathbb{S}^{n-1}}Y(u)\) and integrating in \(u\) gives the displayed
identity. Hence
\begin{equation}\label{eq:int-cj}
    \int_{\G_{n,j}}c_j(F)\,dF
    =
    \frac{j(n-j)}{2}\left(\frac{\lambda}{n-1}-1\right)\|Y\|_2^2.
\end{equation}

Finally, write
\[
    W_j(F,t)=\frac{V_j(F,t)}{V_n(t)^{j/n}}
        =1+a_j(F)t+c_j(F)t^2+t^3S_j(F,t),
        \qquad |S_j(F,t)|\le C .
\]
The functions $a_j,c_j,S_j$ are uniformly bounded for $F\in\G_{n,j}$ and
$|t|\le t_0$. After one further reduction of $t_0$, we also have
$1/2\le W_j(F,t)\le3/2$. Applying Taylor's theorem to $s\mapsto s^{-n}$ on
$[1/2,3/2]$ gives
\begin{equation}\label{eq:uniform-negative-power}
\begin{aligned}
    W_j(F,t)^{-n}
    &=1-na_j(F)t+
      \left(-nc_j(F)+\frac{n(n+1)}2a_j(F)^2\right)t^2
      +t^3\Theta_j(F,t),\\
    |\Theta_j(F,t)|&\le C,
\end{aligned}
\end{equation}
again uniformly in $F$. Thus integration over $\G_{n,j}$ preserves the
third-order bound. Since $\int_{\G_{n,j}}a_j(F)dF=j\int Yd\sigma=0$, equations
\eqref{eq:vj-def}, \eqref{eq:int-cj}, and \eqref{eq:uniform-negative-power}
give
\begin{align*}
\log I_{j,-n}(K_t)^{1/j}
&= -\frac1{nj}\log\int_{\G_{n,j}}\left(\frac{V_j(F,t)}{V_n(t)^{j/n}}\right)^{-n}dF \\
&=\left[\frac1j\int_{\G_{n,j}}c_j(F)dF
-\frac{n+1}{2j}\int_{\G_{n,j}}a_j(F)^2dF\right]t^2+O(t^3).
\end{align*}
The last remainder is a one-variable remainder: there are constants $C,t_0>0$,
depending only on $n$, $j$, and $Y$, such that
\[
 \left|\log I_{j,-n}(K_t)^{1/j}-A_j(Y)t^2\right|\le C|t|^3,
 \qquad |t|\le t_0.
\]
This is \eqref{eq:Aj} with its uniform interpretation.
\end{proof}

\section{The fourth harmonic and its Grassmannian averages}
\label{sec:fourth-harmonic}

Let $Y$ be the fourth zonal harmonic defined in \eqref{eq:Y-def}. Equivalently, $Y$ is the restriction to $\mathbb{S}^{n-1}$ of
\begin{equation}\label{eq:harm-poly}
    H(x)=x_1^4-\frac{6}{n+4}x_1^2|x|^2
    +\frac{3}{(n+2)(n+4)}|x|^4.
\end{equation}
A direct calculation shows that $\Delta_{\R^n}H=0$. Since $H$ is homogeneous of degree four,
\begin{equation}\label{eq:lambda-fourth}
        -\Delta_{\mathbb{S}^{n-1}}Y=4(n+2)Y.
\end{equation}
In particular, $\int_{\mathbb{S}^{n-1}}Y\,d\sigma=0$.

\begin{lemma}\label{lem:averages}
For the function $Y$ in \eqref{eq:Y-def},
\begin{equation}\label{eq:normY}
    \|Y\|_2^2
    =
    \frac{24(n-1)(n+1)}{n(n+2)^2(n+4)^2(n+6)},
\end{equation}
and, for every $1\le j\le n$,
\begin{equation}\label{eq:j-average}
    \int_{\G_{n,j}}\big(jR_jY(F)\big)^2\,dF
    =
    \frac{3j(n-j)(n-j+2)}{(j+2)(n-1)(n+1)}\,\|Y\|_2^2.
\end{equation}
Equivalently,
\begin{equation}\label{eq:Bj-general}
    B_j:=
    \frac{1}{j\|Y\|_2^2}
    \int_{\G_{n,j}}\big(jR_jY(F)\big)^2\,dF
    =
    \frac{3(n-j)(n-j+2)}{(j+2)(n-1)(n+1)}.
\end{equation}
\end{lemma}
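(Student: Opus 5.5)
The plan is to reduce everything to one-variable moment computations, as the choice of the zonal harmonic $Y$ in \eqref{eq:Y-def} was designed to allow.

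\textbf{The norm \eqref{eq:normY}.} I will expand $Y^2$ as a polynomial in $u_1^2$ of degree four. Then I integrate term by term using the standard moments of the first coordinate on $\mathbb{S}^{n-1}$ with normalized measure:
\[
\int_{\mathbb{S}^{n-1}} u_1^{2r}\,d\sigma=\frac{(2r-1)!!}{n(n+2)\cdots(n+2r-2)},\qquad r=0,1,2,3,4 .
\]
Putting everything over the common denominator $n(n+2)^2(n+4)^2(n+6)$ should produce the numerator $24(n-1)(n+1)$. This is a routine but somewhat tedious rational-function simplification.

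\textbf{The Radon transform $R_jY$.} Fix $F\in\G_{n,j}$ and put $s=s(F)=|P_Fe_1|^2\in[0,1]$. For $x\in S_F$ we have $x_1=\langle x,P_Fe_1\rangle$. So if $x$ is uniform on $S_F$, then $x_1$ has the law of $\sqrt{s}\,v_1$ with $v$ uniform on $\mathbb{S}^{j-1}$, and hence
\[
\int_{S_F}x_1^2\,d\sigma_F=\frac{s}{j},\qquad \int_{S_F}x_1^4\,d\sigma_F=\frac{3s^2}{j(j+2)} .
\]
This gives the explicit quadratic
\[
R_jY(F)=\frac{3s^2}{j(j+2)}-\frac{6s}{j(n+4)}+\frac{3}{(n+2)(n+4)} .
\]
As sanity checks, this vanishes at $s=1$, consistent with $j=n$ and mean zero. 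It also has Grassmannian mean zero, consistent with $\int_{\G_{n,j}}R_jY\,dF=\int Y\,d\sigma=0$. For $j=1$ it correctly reduces to $Y(\pm u)$.

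\textbf{The Grassmannian average \eqref{eq:j-average}.} The distribution of $s(F)$ under the invariant measure on $\G_{n,j}$ equals that of $|P_{E}e_1|^2$ for a fixed $E$ and a uniformly random $e_1$. This is the Beta$(j/2,(n-j)/2)$ law, whose moments are
\[
\mathbb{E}s^r=\prod_{i=0}^{r-1}\frac{j+2i}{n+2i}.
\]
I then square the quadratic in $s$, multiply by $j^2$, and integrate using the moments up to $r=4$. The result should be divided by \eqref{eq:normY}.

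\textbf{Main obstacle.} The difficulty is purely algebraic: showing that the resulting rational function factors as $\frac{3j(n-j)(n-j+2)}{(j+2)(n-1)(n+1)}\|Y\|_2^2$. To organize it, I will use that $R_jY$ has mean zero, so only $\mathbb{E}(jR_jY)^2=\operatorname{Var}$ of the quadratic is needed. I will also note that the answer, as a polynomial identity in $j$, must vanish at $j=n$ and at $j=n+2$. The first holds since $R_nY\equiv0$. The second holds because the Beta moments formally degenerate so that $s\equiv1$ there as well. This predicts the factors $(n-j)(n-j+2)$. The remaining constant can then be pinned down by a single evaluation, for example $j=1$, where $\int(R_1Y)^2=\|Y\|_2^2$ directly, giving $\frac{3(n-1)(n+1)}{3(n-1)(n+1)}=1$ as required.

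Finally, \eqref{eq:Bj-general} is just \eqref{eq:j-average} divided by $j\|Y\|_2^2$.
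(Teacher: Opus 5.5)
Your main route is the paper's. You expand $\|Y\|_2^2$ with the coordinate moments. You write $jR_jY(F)=q_j(s)=\frac{3}{j+2}s^2-\frac{6}{n+4}s+\frac{3j}{(n+2)(n+4)}$ with $s=|P_Fe_1|^2$. You then integrate $q_j^2$ against the moments $M_r(j)=\prod_{i<r}\frac{j+2i}{n+2i}$. The paper gets these moments from rotation invariance and the multinomial formula, which amounts to your Beta law. Done by brute force this works; the paper's intermediate value is $\frac{72j(n-j)(n-j+2)}{n(j+2)(n+2)^2(n+4)^2(n+6)}$, which it then divides by \eqref{eq:normY}.

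The interpolation shortcut you propose for the algebra is not correct as written.

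\emph{The reason for vanishing at $j=n+2$ is wrong.} The formal moments there are $M_r(n+2)=\frac{n+2r}{n}$, not $1$, so $s\not\equiv 1$. The formal moment functional at $j=n+2$ is $\varphi\mapsto\varphi(1)+\frac{2}{n}\varphi'(1)$. The vanishing is nevertheless true, for a different reason. One has $q_j(1)=\frac{3(n-j)(n-j+2)}{(j+2)(n+2)(n+4)}$, and in fact $q_{n+2}(s)=\frac{3}{n+4}(s-1)^2$. So $q_{n+2}^2$ vanishes to order four at $s=1$, and that functional kills it.

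\emph{The count of conditions is too small.} Two roots plus one evaluation determine nothing unless you also know the shape of $G(j)=\int_{\G_{n,j}}(jR_jY)^2\,dF$ as a function of $j$. You need two further facts:
\begin{itemize}
\item $(j+2)G(j)$ is a polynomial in $j$ of degree at most $3$. The factor $j+2$ in $M_2,M_3,M_4$ cancels every power of $(j+2)^{-1}$ in the coefficients except one, which comes from the $s^4$ term.
\item $(j+2)G(j)$ is divisible by $j$. Every $M_r(j)$, and the constant term $\frac{9j^2}{(n+2)^2(n+4)^2}$, carries a factor $j$.
\end{itemize}
With these, $(j+2)G(j)=c\,j(n-j)(n-j+2)$. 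Then $G(1)=\|Y\|_2^2$ gives $c=\frac{3\|Y\|_2^2}{(n-1)(n+1)}$, which is exactly \eqref{eq:j-average}.

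Repaired this way, the shortcut is a genuine improvement on the paper's computation. It proves \eqref{eq:j-average} without using the value \eqref{eq:normY} at all, and it replaces a rational-function simplification with three root checks and one evaluation. The norm \eqref{eq:normY} is then a separate calculation. It is quickest via $\|Y\|_2^2=\int Y u_1^4\,d\sigma$, since $Y$ is orthogonal to $u_1^2$ and $1$.
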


\begin{proof}
We first compute the $L^2$ norm. Since all sphere integrals are normalized and
$u_1=\langle u,e_1\rangle$, the standard coordinate-moment formula on the unit
sphere is
\begin{equation}
    \int_{\mathbb{S}^{n-1}} u_1^{2r}\,d\sigma(u)=\frac{(1/2)_r}{(n/2)_r},
    \qquad r=0,1,2,\ldots,
    \label{eq:sphere-moments}
\end{equation}
where $(a)_r=a(a+1)\cdots(a+r-1)$ is the rising factorial. Expanding
\eqref{eq:Y-def} and substituting \eqref{eq:sphere-moments} with
\(r=1,2,3,4\), we get
\[
\begin{aligned}
\|Y\|_2^2
&=\int u_1^8\,d\sigma-\frac{12}{n+4}\int u_1^6\,d\sigma
  +\left(\frac{36}{(n+4)^2}
    +\frac{6}{(n+2)(n+4)}\right)\int u_1^4\,d\sigma \\
&\qquad
  -\frac{36}{(n+2)(n+4)^2}\int u_1^2\,d\sigma
  +\frac{9}{(n+2)^2(n+4)^2},
\end{aligned}
\]
which simplifies to \eqref{eq:normY}.

It remains to compute the Grassmannian square average. For $F\in\G_{n,j}$ put
\[
        T_j(F)=|P_Fe_1|^2.
\]
For a fixed $F$, let $x\in S_F$ be the integration variable. Since only the
vector $P_Fe_1$ matters on $F$, the elementary coordinate moments on
$S_F\simeq S^{j-1}$ give
\[
        \int_{S_F}x_1^2\,d\sigma_F(x)=\frac{T_j(F)}{j},
        \qquad
        \int_{S_F}x_1^4\,d\sigma_F(x)=\frac{3T_j(F)^2}{j(j+2)}.
\]
Therefore
\begin{equation}\label{eq:jRjY-general}
    jR_jY(F)
    =
    \frac{3}{j+2}T_j(F)^2
    -\frac{6}{n+4}T_j(F)
    +\frac{3j}{(n+2)(n+4)}.
\end{equation}

We next compute the moments of $T_j(F)$ in an elementary way.  Let
$F_0=\operatorname{span}\{e_1,\ldots,e_j\}$.  By the rotation invariance of the
normalized measures, for every continuous function $\varphi$ on $[0,1]$,
\[
    \int_{\G_{n,j}}\varphi(T_j(F))\,dF
    =\int_{\mathbb{S}^{n-1}}\varphi(|P_{F_0}u|^2)\,d\sigma(u).
\]
Indeed, both sides are the same invariant average of a function of the squared
length of the projection of a fixed unit vector onto a moving $j$-plane.  Since
$|P_{F_0}u|^2=u_1^2+\cdots+u_j^2$, the standard coordinate-moment formula
\[
    \int_{\mathbb{S}^{n-1}}u_1^{2\alpha_1}\cdots u_j^{2\alpha_j}\,d\sigma(u)
    =\frac{(1/2)_{\alpha_1}\cdots(1/2)_{\alpha_j}}
           {(n/2)_{\alpha_1+\cdots+\alpha_j}}
\]
and the multinomial theorem give, for the only four moments needed below,
\begin{align}
    M_1(j)&:=\int_{\G_{n,j}}T_j(F)\,dF
        =\frac{j}{n},\notag\\
    M_2(j)&:=\int_{\G_{n,j}}T_j(F)^2\,dF
        =\frac{j(j+2)}{n(n+2)},\notag\\
    M_3(j)&:=\int_{\G_{n,j}}T_j(F)^3\,dF
        =\frac{j(j+2)(j+4)}{n(n+2)(n+4)},\notag\\
    M_4(j)&:=\int_{\G_{n,j}}T_j(F)^4\,dF
        =\frac{j(j+2)(j+4)(j+6)}{n(n+2)(n+4)(n+6)}.
        \label{eq:general-T-moments}
\end{align}
Squaring \eqref{eq:jRjY-general} and using \eqref{eq:general-T-moments} gives
\[
\begin{aligned}
\int_{\G_{n,j}}(jR_jY)^2\,dF
&=
\frac{9}{(j+2)^2}M_4(j)
-\frac{36}{(j+2)(n+4)}M_3(j)  \\
&\quad+
\left(\frac{36}{(n+4)^2}
+\frac{18j}{(j+2)(n+2)(n+4)}\right)M_2(j) \\
&\quad-
\frac{36j}{(n+2)(n+4)^2}M_1(j)
+\frac{9j^2}{(n+2)^2(n+4)^2}  \\
&=\frac{72j(n-j)(n-j+2)}
        {n(j+2)(n+2)^2(n+4)^2(n+6)}.
\end{aligned}
\]
Together with \eqref{eq:normY}, this is \eqref{eq:j-average}. When $j=n$, both
sides of \eqref{eq:j-average} are zero because $R_nY=\int_{\mathbb{S}^{n-1}}Y\,d\sigma=0$.
\end{proof}

\section{Proof of the counterexample}
\label{sec:proof}

\begin{proof}[Proof of Theorem \ref{thm:main}]
Fix integers $m,k,n$ with $1\le m<k\le n-1$ and $n>(m+2)(k+2)-2$, and let $Y$ be given by
\eqref{eq:Y-def}. The matrix
\[
        \nabla^2_{\mathbb{S}^{n-1}}h_t+h_tg
        =g+t\big(\nabla^2_{\mathbb{S}^{n-1}}Y+Yg\big)
\]
is positive definite for all sufficiently small $|t|$, because $\mathbb{S}^{n-1}$ is compact. After shrinking $|t|$ if necessary, also $h_t>0$ on $\mathbb{S}^{n-1}$. These conditions imply that $h_t=1+tY$ is the support function of a $C^2_+$ convex body $K_t$. Since $Y$ is even, $K_t$ is origin-symmetric.

Apply Proposition \ref{prop:second-variation} with $\lambda=4(n+2)$. Then
\[
        \frac{\lambda}{n-1}-1
        =\frac{4(n+2)}{n-1}-1
        =\frac{3(n+3)}{n-1}.
\]
Since $1\le m<k\le n-1$, using Lemma \ref{lem:averages}, for $1\le j\le n-1$ we may write
\begin{equation}\label{eq:Aj-general-fourth}
    A_j(Y)
    =
    \frac12\left[
        (n-j)\frac{3(n+3)}{n-1}
        -(n+1)B_j
    \right]\|Y\|_2^2,
\end{equation}
where $B_j$ is given by \eqref{eq:Bj-general}. Equivalently,
\begin{equation}\label{eq:Aj-simplified-fourth}
    A_j(Y)=
    \frac{3(n+4)(n-j)(j+1)}{2(j+2)(n-1)}\|Y\|_2^2.
\end{equation}
Hence
\begin{align*}
    \log \frac{I_{m,-n}(K_t)^{1/m}}{I_{k,-n}(K_t)^{1/k}}
    &=\big(A_m(Y)-A_k(Y)\big)t^2+O(t^3)\\
    &=\frac{3(n+4)(k-m)\big((m+2)(k+2)-n-2\big)}
         {2(n-1)(m+2)(k+2)}\|Y\|_2^2t^2+O(t^3).
\end{align*}
Since $n>(m+2)(k+2)-2$, the coefficient of $t^2$ is strictly negative. More explicitly, set
\[
        c_{m,k,n}=\frac{3(n+4)(k-m)\big(n+2-(m+2)(k+2)\big)}
         {2(n-1)(m+2)(k+2)}\|Y\|_2^2>0 .
\]
By the uniform remainder statement above, there exist constants $\varepsilon>0$ and $C>0$, depending on the fixed dimension and the fixed perturbation $Y$, such that the absolute value of the remaining term is at most $C|t|^3$ whenever $|t|\le\varepsilon$. Choosing
\[
        0<|t|<\min\left\{\varepsilon,\frac{c_{m,k,n}}{2C}\right\}
\]
gives
\[
    \log \frac{I_{m,-n}(K_t)^{1/m}}{I_{k,-n}(K_t)^{1/k}}
    \le -c_{m,k,n}t^2+C|t|^3<0.
\]
Consequently,
\[
        I_{m,-n}(K_t)^{1/m}<I_{k,-n}(K_t)^{1/k}.
\]
This gives the strict reverse inequality to the corresponding comparison in
the conjectural chain \eqref{eq:conj-chain} at \(p=-n\).
\end{proof}

\section{The three-dimensional endpoint chain}\label{sec:dim3-chain}

This section proves Theorem \ref{thm:dim3-chain}.  We keep the notation and
normalizations from Section \ref{pre}.  Thus measures on spheres and
Grassmannians are normalized unless the unnormalized surface element \(d\omega\)
is explicitly displayed.  In this section all polar bodies are taken with
respect to the origin, and if a body lies in a proper subspace then its polar is
taken inside that subspace.

For \(K\subset\R^3\), write
\[
        w_K(u)=\vol_1(P_{\R u}K),\qquad
        b_K(u)=\vol_2(P_{u^\perp}K),\qquad u\in\mathbb S^2.
\]
Let \(B_K\) be the centered Euclidean ball with \(\vol_3(B_K)=\vol_3(K)\), and
let \(r_K\) be its radius.  Thus
\[
        \vol_3(K)=\vol_3(B_K)=\frac{4\pi}{3}r_K^3.
\]
Since functions on \(\G_{3,1}\) and \(\G_{3,2}\) may be represented by even
functions on \(\mathbb S^2\), the definition \eqref{eq:I-def} gives
\begin{align}
 I_{1,-3}(K)
 &=\frac{1}{2r_K}
      \left(\int_{\mathbb S^2}w_K(u)^{-3}\,d\sigma(u)\right)^{-1/3},
      \label{eq:dim3-I1-width}
      \\
 I_{2,-3}(K)^{1/2}
 &=\frac{1}{\sqrt\pi\,r_K}
      \left(\int_{\mathbb S^2}b_K(u)^{-3}\,d\sigma(u)\right)^{-1/6}.
      \label{eq:dim3-I2-bright}
\end{align}
Put
\[
        A_K=\int_{\mathbb S^2}w_K(u)^{-3}\,d\sigma(u),\qquad
        B_K'=\int_{\mathbb S^2}b_K(u)^{-3}\,d\sigma(u).
\]
Then \eqref{eq:dim3-I1-width} and \eqref{eq:dim3-I2-bright} show that
\[
        I_{1,-3}(K)\ge I_{2,-3}(K)^{1/2}
\]
is equivalent to
\begin{equation}\label{eq:dim3-sigma-form}
        B_K'\ge \frac{64}{\pi^3}A_K^2.
\end{equation}
Indeed, after canceling \(r_K\), one raises the equivalent inequality
\((B_K')^{1/6}\ge 2\pi^{-1/2}A_K^{1/3}\) to the sixth power.

For a convex body \(Q\subset\R^3\) containing the origin in its interior, the
polar-coordinate formula gives
\begin{equation}\label{eq:dim3-polar-volume-normalized}
        \vol_3(Q^\circ)
        =\frac13\int_{\mathbb S^2}h_Q(u)^{-3}\,d\omega(u)
        =\frac{4\pi}{3}\int_{\mathbb S^2}h_Q(u)^{-3}\,d\sigma(u).
\end{equation}
Let
\[
        DK=K+(-K)
\]
be the difference body, and let \(\Pi K\) be the projection body, normalized by
\[
        h_{\Pi K}(u)=\vol_2(P_{u^\perp}K)=b_K(u),\qquad u\in\mathbb S^2.
\]
Since \(h_{DK}=w_K\), \eqref{eq:dim3-sigma-form} is equivalent to
\begin{equation}\label{eq:dim3-target-K}
        \vol_3((\Pi K)^\circ)
        \ge \frac{48}{\pi^4}\vol_3((DK)^\circ)^2.
\end{equation}

Set
\[
        L=\frac12DK.
\]
Then \(L\) is origin-symmetric and
\begin{equation}\label{eq:dim3-Lpolar}
        L^\circ=2(DK)^\circ,
        \qquad \vol_3(L^\circ)=8\vol_3((DK)^\circ).
\end{equation}
For fixed \(u\in\mathbb S^2\), let \(A=P_{u^\perp}K\subset u^\perp\).  Since
projection commutes with Minkowski addition,
\[
        P_{u^\perp}L=\frac12(A-A).
\]
The two-dimensional Brunn--Minkowski inequality gives
\[
        \vol_2(A-A)^{1/2}\ge \vol_2(A)^{1/2}+\vol_2(-A)^{1/2}
        =2\vol_2(A)^{1/2}.
\]
Consequently
\begin{equation}\label{eq:dim3-BM-projection}
        \vol_2(P_{u^\perp}L)=\frac14\vol_2(A-A)
        \ge \vol_2(A)=\vol_2(P_{u^\perp}K).
\end{equation}
Equivalently, \(h_{\Pi L}\ge h_{\Pi K}\), and hence \(\Pi K\subseteq \Pi L\).
Passing to polars reverses inclusion, so
\[
        (\Pi L)^\circ\subseteq(\Pi K)^\circ,
        \qquad \vol_3((\Pi K)^\circ)\ge \vol_3((\Pi L)^\circ).
\]
Thus \eqref{eq:dim3-target-K} follows from the following inequality for
origin-symmetric bodies:
\begin{equation}\label{eq:dim3-DP}
        \vol_3((\Pi L)^\circ)
        \ge \frac{3}{4\pi^4}\vol_3(L^\circ)^2,
        \qquad L=-L\subset\R^3.
\end{equation}
Indeed, \eqref{eq:dim3-Lpolar} gives
\[
        \frac{3}{4\pi^4}\vol_3(L^\circ)^2
        =\frac{48}{\pi^4}\vol_3((DK)^\circ)^2.
\]
It remains to prove \eqref{eq:dim3-DP}.

Let \(M=L^\circ\).  For \(u\in\mathbb S^2\), define the central section
\[
        A_u=M\cap u^\perp,
\]
and set
\[
        p(u)=\vol_2(A_u^\circ),
\]
where \(A_u^\circ\) is the polar body of \(A_u\) inside the plane \(u^\perp\).
We shall use the elementary polarity identity
\begin{equation}\label{eq:dim3-projection-section-polarity}
        (P_E L)^\circ_E=L^\circ\cap E
\end{equation}
for every linear subspace \(E\subset\R^3\).  Here the polar on the left is taken
inside \(E\).  Indeed, for \(y\in E\), one has \(h_{P_E L}(y)=h_L(y)\), and hence
\[
        (P_E L)^\circ_E
        =\{y\in E:h_L(y)\le 1\}=L^\circ\cap E.
\]
Taking \(E=u^\perp\), we get
\[
        P_{u^\perp}L=(L^\circ\cap u^\perp)^\circ=A_u^\circ.
\]
Thus
\[
        h_{\Pi L}(u)=\vol_2(P_{u^\perp}L)=\vol_2(A_u^\circ)=p(u),
\]
and by \eqref{eq:dim3-polar-volume-normalized},
\[
        \vol_3((\Pi L)^\circ)=\frac13\int_{\mathbb S^2}p(u)^{-3}\,d\omega(u).
\]
Since \(\vol_3(L^\circ)=\vol_3(M)\), inequality \eqref{eq:dim3-DP} is equivalent
to
\begin{equation}\label{eq:dim3-section-target}
        \int_{\mathbb S^2}p(u)^{-3}\,d\omega(u)
        \ge \frac{9}{4\pi^4}\vol_3(M)^2.
\end{equation}

\subsection{A sharp planar determinant estimate}

If \(A\subset\R^2\) is an origin-symmetric convex body, define
\begin{equation}\label{eq:dim3-D-def}
        D(A)=\int_A\int_A |\det(x,y)|\,dx\,dy.
\end{equation}

\begin{lemma}\label{lem:dim3-planar}
Let \(A\subset\R^2\) be an origin-symmetric convex body.  Then
\begin{equation}\label{eq:dim3-planar-estimate}
        D(A)\le \frac{8\pi^4}{9}\vol_2(A^\circ)^{-3}.
\end{equation}
Equality holds if and only if \(A\) is a centered ellipse.
\end{lemma}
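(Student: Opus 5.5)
The plan is to reduce $D(A)$ to a mixed volume involving the centroid body of $A$, then use Minkowski's inequality and the planar Blaschke--Santal\'o inequality. Throughout, let $J$ denote rotation by $\pi/2$ in $\R^2$, so that $\det(x,y)=\langle Jx,y\rangle$. Define the (unnormalized) centroid body $\Gamma A$ by
\[
h_{\Gamma A}(v)=\int_A|\langle v,y\rangle|\,dy .
\]
Integrating out $y$ in \eqref{eq:dim3-D-def} gives
\[
D(A)=\int_A h_{\Gamma A}(Jx)\,dx=\int_A h_{J^{-1}\Gamma A}(x)\,dx .
\]

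The first key step is to identify the right-hand side with a mixed area. The function $h_{\Gamma A}$ is the support function of a planar zonoid whose generating measure is Lebesgue measure on $A$. This is where the special planar relation between projection bodies and rotated bodies enters. In the plane, the surface area measure of a zonoid $Z$ with $h_Z(v)=\int|\langle x,v\rangle|\,d\mu(x)$ is $2\,J_*\big(|x|\,d\mu\big)$, pushed to directions. Equivalently, for every convex body $X$,
\[
2V(X,Z)=\int h_X\,dS_Z=2\int h_X(Jx)\,d\mu(x).
\]
Applying this with $\mu=dx|_A$ yields $V(X,\Gamma A)=\int_A h_X(Jx)\,dx$, up to a constant that I will pin down carefully. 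In particular $D(A)=c_1\,V(\Gamma A,\Gamma A)=c_1\vol_2(\Gamma A)$. Checking the normalization constant $c_1$ (e.g.\ against the disc) is the first thing to do.

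The second step is to bound $\vol_2(\Gamma A)$ from above. The Busemann--Petty centroid inequality goes the wrong way here, so instead I use Minkowski's inequality $V(\Gamma A,X)^2\ge \vol_2(\Gamma A)\vol_2(X)$ with a well-chosen $X$. Choose $X$ so that $h_X(Jx)=\|x\|_A$, i.e.\ $X=J^{-1}A^\circ$. Then
\[
V(\Gamma A,X)=c_2\int_A\|x\|_A\,dx=\tfrac23 c_2\vol_2(A),
\]
and $\vol_2(X)=\vol_2(A^\circ)$. This gives
\[
D(A)\le c_1\Big(\tfrac23c_2\Big)^2\,\frac{\vol_2(A)^2}{\vol_2(A^\circ)} .
\]
Finally, the planar Blaschke--Santal\'o inequality $\vol_2(A)\vol_2(A^\circ)\le\pi^2$ converts $\vol_2(A)^2$ into at most $\pi^4\vol_2(A^\circ)^{-2}$. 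The result is $D(A)\le C\vol_2(A^\circ)^{-3}$.

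The main obstacle is bookkeeping of constants rather than any conceptual difficulty. I will fix the constant by checking that $A=B_2^2$ gives equality. Indeed, $D(B_2^2)$ can be computed directly by polar coordinates:
\[
D(B_2^2)=\Big(\int_0^1 r^2\,dr\Big)^2\int\!\!\int|\sin(\theta-\phi)|\,d\theta\,d\phi=\tfrac19\cdot 8\pi=\tfrac{8\pi}{9},
\]
while the right-hand side of \eqref{eq:dim3-planar-estimate} is $\tfrac{8\pi^4}{9}\pi^{-3}=\tfrac{8\pi}{9}$. The two agree, which confirms that the chain of inequalities is sharp at the disc. For the equality cases, note that Blaschke--Santal\'o forces $A$ to be a centered ellipse. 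Conversely, for an ellipse everything is $\mathrm{GL}_2$-equivariant: both sides transform by $|\det|^3$, so equality propagates from the disc. In particular $\Gamma A$ is then homothetic to $J^{-1}A^\circ$, as required for equality in Minkowski's inequality.
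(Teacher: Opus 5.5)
Your proposal is correct and is essentially the paper's proof. Both arguments use the reduction $D(A)=\tfrac12\vol_2(\Gamma A)$ (for your unnormalized $\Gamma A$), Minkowski's inequality, Blaschke--Santal\'o, and the equality analysis via Santal\'o plus $GL_2$-invariance. The paper instead builds an auxiliary body $C$ with $dS_C=h_{A^\circ}^{-3}\,d\omega_1$ from Minkowski's existence theorem and uses $\Pi C=2RC$; since that $C$ is just a rotated, rescaled copy of $\Gamma A$, your direct pairing of $\Gamma A$ with $J^{-1}A^\circ$ is the same inequality without the existence theorem.

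One correction to your displayed zonoid formula: for even $\mu$ it should read $V(X,Z)=2\int h_X(Jx)\,d\mu(x)$, because each segment $[-x,x]$ contributes two edges of length $2|x|$. Hence $c_2=2$ and $c_1=\tfrac12$, which gives $D(A)\le\tfrac89\,\vol_2(A)^2/\vol_2(A^\circ)$ exactly as in the paper, consistent with your disc check.
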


\begin{proof}
Let \(\Gamma A\) denote the centroid body of \(A\), normalized by
\[
        h_{\Gamma A}(\xi)=\frac1{\vol_2(A)}\int_A |\langle x,\xi\rangle|\,dx,
        \qquad \xi\in\R^2.
\]
By the mixed-volume formula for zonoids
\cite[Theorem 5.3.2, in particular formula (5.82)]{Schneider}, if
\(h_Z(\xi)=\int |\langle x,\xi\rangle|\,d\mu(x)\), then
\[
        \vol_2(Z)=2\int\int |\det(x,y)|\,d\mu(x)d\mu(y).
\]
Applying this to \(\mu=\vol_2(A)^{-1}{\bf 1}_A(x)dx\) gives
\begin{equation}\label{eq:dim3-centroid-area}
        \vol_2(\Gamma A)=\frac{2}{\vol_2(A)^2}D(A),
        \qquad\text{or equivalently}\qquad
        D(A)=\frac{\vol_2(A)^2}{2}\vol_2(\Gamma A).
\end{equation}

We next prove the sharp polar centroid estimate
\begin{equation}\label{eq:dim3-polar-centroid-planar}
        \vol_2(\Gamma A)\,\vol_2(A^\circ)\le \frac{16}{9}.
\end{equation}
Put \(Q=A^\circ\).  Let \(d\omega_1\) be arclength measure on \(\mathbb S^1\), and
consider the measure
\begin{equation}\label{eq:dim3-C-surface-area-measure}
        d\mu_Q(v)=h_Q(v)^{-3}\,d\omega_1(v),\qquad v\in\mathbb S^1.
\end{equation}
Since \(Q\) is origin-symmetric, \(h_Q\) is even; hence \(\mu_Q\) is even and its
first moment is zero.  Its density is strictly positive, so it is not
concentrated on an antipodal pair.  By the planar Minkowski existence theorem
\cite{Schneider}, there is a planar convex body \(C\), unique up to translation,
whose surface area measure is
\begin{equation}\label{eq:dim3-C-surface-area-C}
        dS_C=d\mu_Q.
\end{equation}
The measure \(dS_C\) is even; by uniqueness in the Minkowski problem, \(C\) is
centrally symmetric up to translation.

Since \(Q=A^\circ\), we have \(\rho_A(v)=h_Q(v)^{-1}\).  Polar coordinates give,
for every \(\xi\in\R^2\),
\begin{align*}
        h_{\Gamma A}(\xi)
        &=\frac1{\vol_2(A)}\int_A |\langle x,\xi\rangle|\,dx  \\
        &=\frac1{3\vol_2(A)}\int_{\mathbb S^1}|\langle v,\xi\rangle|
          \rho_A(v)^3\,d\omega_1(v) \\
        &=\frac1{3\vol_2(A)}\int_{\mathbb S^1}|\langle v,\xi\rangle|
          h_Q(v)^{-3}\,d\omega_1(v).
\end{align*}
For the planar projection body,
\[
        h_{\Pi C}(\xi)=\frac12\int_{\mathbb S^1}|\langle v,\xi\rangle|\,dS_C(v).
\]
Using \eqref{eq:dim3-C-surface-area-measure} and
\eqref{eq:dim3-C-surface-area-C}, we obtain
\begin{equation}\label{eq:dim3-Gamma-PiC-factorization}
        \Gamma A=\frac{2}{3\vol_2(A)}\,\Pi C.
\end{equation}
Because \(C\) is centrally symmetric up to translation and projection bodies are
translation invariant, after translating \(C\) to its center we have
\[
        h_{\Pi C}(\xi)=\vol_1(P_{\xi^\perp}C)=2h_C(R\xi),
\]
where \(R\) is a rotation by \(\pi/2\) in the plane.  Equivalently,
\[
        \Pi C=2RC .
\]
In particular,
\begin{equation}\label{eq:dim3-Pi-planar-area}
        \vol_2(\Pi C)=4\vol_2(C).
\end{equation}

The mixed area of \(C\) and \(Q\) is
\begin{equation}\label{eq:dim3-mixed-area-CQ}
        V(C,Q)
        =\frac12\int_{\mathbb S^1}h_Q\,dS_C
        =\frac12\int_{\mathbb S^1}h_Q(v)^{-2}\,d\omega_1(v)
        =\vol_2(Q^\circ)
        =\vol_2(A).
\end{equation}
By the planar Minkowski inequality,
\[
        V(C,Q)^2\ge \vol_2(C)\vol_2(Q).
\]
Together with \eqref{eq:dim3-mixed-area-CQ}, this gives
\begin{equation}\label{eq:dim3-C-volume-bound}
        \vol_2(C)\le \frac{\vol_2(A)^2}{\vol_2(Q)}
        =\frac{\vol_2(A)^2}{\vol_2(A^\circ)}.
\end{equation}
Combining \eqref{eq:dim3-Gamma-PiC-factorization},
\eqref{eq:dim3-Pi-planar-area}, and \eqref{eq:dim3-C-volume-bound}, we get
\begin{equation}\label{eq:dim3-Gamma-bound}
        \vol_2(\Gamma A)
        =\left(\frac{2}{3\vol_2(A)}\right)^2\vol_2(\Pi C)
        =\frac{16}{9\vol_2(A)^2}\vol_2(C)
        \le \frac{16}{9\vol_2(A^\circ)}.
\end{equation}
This proves \eqref{eq:dim3-polar-centroid-planar}.

Substituting \eqref{eq:dim3-Gamma-bound} into \eqref{eq:dim3-centroid-area}
gives
\begin{equation}\label{eq:dim3-D-before-santalo}
        D(A)\le \frac{\vol_2(A)^2}{2}\cdot\frac{16}{9\vol_2(A^\circ)}
        =\frac{8}{9}\frac{\vol_2(A)^2}{\vol_2(A^\circ)}.
\end{equation}
Finally, the planar Blaschke--Santalo inequality for origin-symmetric bodies
\cite{Schneider} states that
\[
        \vol_2(A)\vol_2(A^\circ)\le \pi^2,
\]
with equality if and only if \(A\) is a centered ellipse.  Hence
\[
        \vol_2(A)^2\le \frac{\pi^4}{\vol_2(A^\circ)^2}.
\]
Inserting this into \eqref{eq:dim3-D-before-santalo} yields
\[
        D(A)\le \frac{8\pi^4}{9}\vol_2(A^\circ)^{-3}.
\]
This proves \eqref{eq:dim3-planar-estimate}.

If equality holds in \eqref{eq:dim3-planar-estimate}, then equality must hold
in the last use of the planar Blaschke--Santalo inequality, and therefore \(A\)
is a centered ellipse.  Conversely, the disk gives equality by the direct
computation
\[
        D(B_2^2)=\frac{8\pi}{9},\qquad \vol_2((B_2^2)^\circ)=\pi,
\]
and the inequality is invariant under non-singular linear maps in the plane:
\(D(TA)=|\det T|^3D(A)\) and
\(\vol_2((TA)^\circ)=|\det T|^{-1}\vol_2(A^\circ)\).  Hence every centered
ellipse gives equality.
\end{proof}

\subsection{The Blaschke--Petkantschin formula}

We use the following form of the Blaschke--Petkantschin formula
\cite[Theorem 7.2.1]{SchneiderWeil} in \(\R^3\): for every non-negative measurable function
\(F\) on \(\R^3\times\R^3\),
\begin{equation}\label{eq:dim3-BP}
\int_{\mathbb S^2}\int_{u^\perp}\int_{u^\perp}
        F(x,y)|\det_{u^\perp}(x,y)|\,dx\,dy\,d\omega(u)
 =2\int_{\R^3}\int_{\R^3}F(x,y)\,dx\,dy.
\end{equation}
Here \(|\det_{u^\perp}(x,y)|\) denotes the Euclidean area of the parallelogram
spanned by \(x,y\) inside the plane \(u^\perp\).  The factor \(2\) corresponds to
the two unit normals to a two-dimensional subspace.

Apply \eqref{eq:dim3-BP} to \(F(x,y)={\bf 1}_M(x){\bf 1}_M(y)\).  Since
\(A_u=M\cap u^\perp\), the left-hand side becomes
\(\int_{\mathbb S^2}D(A_u)\,d\omega(u)\).  Thus
\begin{equation}\label{eq:dim3-int-D-sections}
        \int_{\mathbb S^2}D(A_u)\,d\omega(u)=2\vol_3(M)^2.
\end{equation}
By Lemma \ref{lem:dim3-planar}, applied in the plane \(u^\perp\) to the
origin-symmetric body \(A_u\),
\begin{equation}\label{eq:dim3-section-D-bound}
        D(A_u)\le \frac{8\pi^4}{9}\vol_2(A_u^\circ)^{-3}
        =\frac{8\pi^4}{9}p(u)^{-3}.
\end{equation}
Equivalently,
\[
        p(u)^{-3}\ge \frac{9}{8\pi^4}D(A_u).
\]
Integrating this inequality and using \eqref{eq:dim3-int-D-sections}, we obtain
\[
\int_{\mathbb S^2}p(u)^{-3}\,d\omega(u)
\ge \frac{9}{8\pi^4}\int_{\mathbb S^2}D(A_u)\,d\omega(u)
=\frac{9}{4\pi^4}\vol_3(M)^2.
\]
This is \eqref{eq:dim3-section-target}, and hence \eqref{eq:dim3-DP} follows.
Consequently \eqref{eq:dim3-target-K} holds for every convex body
\(K\subset\R^3\), and therefore
\begin{equation}\label{eq:dim3-first-comparison-proved}
        I_{1,-3}(K)\ge I_{2,-3}(K)^{1/2}.
\end{equation}

\subsection{Equality cases}

We now prove the equality statements.  Translations do not change projection
volumes, widths, brightness functions, or the quantities \(I_{j,-3}\).  If
\(T\in GL(3)\), then
\[
        D(TK)=T(DK),
        \qquad
        \Pi(TK)=|\det T|\,T^{-t}\Pi K.
\]
Consequently
\[
        \vol_3((D(TK))^\circ)=|\det T|^{-1}\vol_3((DK)^\circ),
        \qquad
        \vol_3((\Pi(TK))^\circ)=|\det T|^{-2}\vol_3((\Pi K)^\circ),
\]
while \(r_{TK}=|\det T|^{1/3}r_K\).  The polar formulas
\eqref{eq:dim3-I1-width}--\eqref{eq:dim3-I2-bright} therefore show that
\(I_{1,-3}\), \(I_{2,-3}\), and \(I_{3,-3}\) are invariant under non-singular
affine transformations.  Since balls give equality directly, all ellipsoids give
equality throughout the chain.

Conversely, suppose first that equality holds in
\[
        I_{1,-3}(K)=I_{2,-3}(K)^{1/2}.
\]
Then equality holds in \eqref{eq:dim3-target-K}.  In the proof of
\eqref{eq:dim3-target-K} we used the two inequalities
\[
        \vol_3((\Pi K)^\circ)\ge \vol_3((\Pi L)^\circ)
        \ge \frac{3}{4\pi^4}\vol_3(L^\circ)^2,
        \qquad L=\frac12DK.
\]
Since the final constants agree exactly with \eqref{eq:dim3-target-K}, equality
in \eqref{eq:dim3-target-K} forces equality in both inequalities above.  In
particular, \((\Pi L)^\circ\subseteq(\Pi K)^\circ\) and the two bodies have the
same volume; hence \((\Pi L)^\circ=(\Pi K)^\circ\), and therefore
\(\Pi L=\Pi K\).  Thus
\[
        \vol_2(P_{u^\perp}L)=\vol_2(P_{u^\perp}K)
        \qquad\text{for every }u\in\mathbb S^2.
\]
Comparing this with \eqref{eq:dim3-BM-projection}, equality holds in the
Brunn--Minkowski inequality for the planar bodies \(A=P_{u^\perp}K\) and \(-A\)
for every \(u\).  The equality condition in the Brunn--Minkowski inequality
implies that \(A\) and \(-A\) are translates of one another.  Hence every
two-dimensional orthogonal projection of \(K\) is centrally symmetric.

We spell out why this forces \(K\) itself to be centrally symmetric up to
translation.  Define the odd, positively homogeneous function
\[
        f(x)=h_K(x)-h_K(-x),\qquad x\in\R^3.
\]
If \(E\) is a two-dimensional subspace and \(P_EK\) is centrally symmetric with
center \(c_E\in E\), then, for every \(x\in E\),
\[
        h_K(x)-h_K(-x)
        =h_{P_EK}(x)-h_{P_EK}(-x)
        =2\langle c_E,x\rangle.
\]
Thus \(f|_E\) is linear on every two-dimensional subspace \(E\).  Since any two
vectors in \(\R^3\) lie in such a subspace, \(f\) is additive; together with its
homogeneity, this shows that \(f\) is a linear functional.  Hence there is
\(a\in\R^3\) such that \(f(x)=2\langle a,x\rangle\) for all \(x\).  Therefore
\[
        h_{K-a}(x)=h_K(x)-\langle a,x\rangle
        =h_K(-x)+\langle a,x\rangle=h_{K-a}(-x),
\]
so \(K-a\) is origin-symmetric.  After translating \(K\), we may therefore assume
\(K=-K\).  In this position, \(DK=2K\), so \(L=K\).

Equality also holds in \eqref{eq:dim3-DP}.  In the proof of
\eqref{eq:dim3-DP}, the only pointwise inequality was the planar determinant
estimate \eqref{eq:dim3-section-D-bound}.  Since the difference between the
right-hand side and the left-hand side of \eqref{eq:dim3-section-D-bound} is
non-negative and its integral is zero, equality holds in Lemma
\ref{lem:dim3-planar} for \(d\omega\)-almost every central section
\[
        A_u=K^\circ\cap u^\perp.
\]
Therefore \(K^\circ\cap u^\perp\) is an ellipse for almost every
\(u\in\mathbb S^2\).  The full-measure set of such directions is dense in
\(\mathbb S^2\).  Moreover, the set of directions for which the central section
is an ellipse is closed.  Indeed, if \(u_i\to u\), choose rotations
\(R_i:u^\perp\to u_i^\perp\) with \(R_i\to {\rm Id}\).  The radial functions of
\(R_i^{-1}(K^\circ\cap u_i^\perp)\) then converge uniformly on
\(u^\perp\cap\mathbb S^2\) to the radial function of \(K^\circ\cap u^\perp\).
Since \(K^\circ\) contains some Euclidean ball \(rB_2^3\) and is contained in
some \(RB_2^3\), these planar sections are uniformly non-degenerate.  A centered
ellipse in a fixed plane is the unit ball of a positive definite quadratic form;
the uniform bounds give compactness of the corresponding matrices.  Hence the
Hausdorff limit of these centered ellipses is again a centered ellipse.
Therefore every central plane section of \(K^\circ\) is an ellipse.

By Busemann's ellipsoid characterization theorem \cite{Busemann1949,Gardner}, a
three-dimensional convex body whose central plane sections through a fixed
interior point are ellipses is an ellipsoid.  Thus \(K^\circ\), and hence \(K\),
is an ellipsoid.

\section{The first endpoint comparison in dimension four}
\label{dim4}
The same polar-section argument also gives the first endpoint comparison in
dimension four.  We record the argument, since it is formally parallel to the
proof of \eqref{eq:dim3-first-comparison-proved}.

Let \(K\subset\R^4\) be a convex body, and let \(B_K\) be the centered Euclidean
ball with \(\vol_4(B_K)=\vol_4(K)\).  Write \(r_K\) for its radius, so that
\[
        \vol_4(K)=\kappa_4 r_K^4,
        \qquad
        \kappa_4=\frac{\pi^2}{2}.
\]
For \(u\in\mathbb S^3\) and \(F\in\G_{4,2}\), put
\[
        w_K(u)=\vol_1(P_{\R u}K),
        \qquad
        b_K(F)=\vol_2(P_FK),
\]
and set
\[
        A_K=\int_{\mathbb S^3}w_K(u)^{-4}\,d\sigma(u),
        \qquad
        B_K^{(2)}=\int_{\G_{4,2}}b_K(F)^{-4}\,dF .
\]
Here the integral over \(\mathbb S^3\) agrees with the integral over
\(\G_{4,1}\), since \(w_K\) is even and the normalized measure on
\(\G_{4,1}\) is the push-forward of the normalized measure on \(\mathbb S^3\)
under the antipodal quotient map.

By the definition \eqref{eq:I-def},
\[
        I_{1,-4}(K)
        =
        \frac{1}{2r_K}A_K^{-1/4},
        \qquad
        I_{2,-4}(K)^{1/2}
        =
        \frac{1}{\sqrt{\pi}\,r_K}\big(B_K^{(2)}\big)^{-1/8}.
\]
Hence
\[
        I_{1,-4}(K)\ge I_{2,-4}(K)^{1/2}
\]
is equivalent to
\begin{equation}\label{eq:dim4-first-target}
        B_K^{(2)}\ge \frac{256}{\pi^4}A_K^2 .
\end{equation}

We shall use the following sharp planar estimate.  If \(A\subset\R^2\) is an
origin-symmetric convex body and
\[
        D_2(A)=\int_A\int_A |\det(x,y)|^2\,dx\,dy,
\]
then
\begin{equation}\label{eq:dim4-planar-D2}
        D_2(A)\le \frac{\pi^6}{8}\vol_2(A^\circ)^{-4}.
\end{equation}
Indeed, if
\[
        M_A=\int_A xx^t\,dx ,
\]
then a direct computation gives
\[
        D_2(A)=2\det M_A .
\]
By Ball's quadratic Santal\'o inequality, recently proved in
\cite{BoroczkyPatsalosSaroglou},
\[
        \operatorname{tr}(M_AM_{A^\circ})
        =
        \int_A\int_{A^\circ}\langle x,y\rangle^2\,dx\,dy
        \le
        \int_{B_2^2}\int_{B_2^2}\langle x,y\rangle^2\,dx\,dy
        =
        \frac{\pi^2}{8}.
\]
On the other hand,
\[
        \operatorname{tr}(M_AM_{A^\circ})
        \ge 2\sqrt{\det M_A\,\det M_{A^\circ}} .
\]
We also use the planar moment lower bound
\[
        \det M_C\ge \frac{\vol_2(C)^4}{16\pi^2},
\]
with equality for centered ellipses.  Applying this to \(C=A^\circ\), and
writing \(V=\vol_2(A^\circ)\), we get
\[
        2\sqrt{\det M_A}\,\frac{V^2}{4\pi}
        \le
        \frac{\pi^2}{8}.
\]
Thus
\[
        \det M_A\le \frac{\pi^6}{16}V^{-4}.
\]
Since \(D_2(A)=2\det M_A\), \eqref{eq:dim4-planar-D2} follows.  Equality holds
for centered ellipses.

We now prove \eqref{eq:dim4-first-target}.  Let
\[
        DK=K+(-K),
        \qquad
        L=\frac12DK,
\]
and put
\[
        M=L^\circ .
\]
Then \(L\) is origin-symmetric and
\[
        w_K(u)=h_{DK}(u)=2h_L(u).
\]
Since \(M=L^\circ\), the polar-coordinate formula in \(\R^4\), with normalized
spherical measure, gives
\[
        \vol_4(M)
        =
        \kappa_4\int_{\mathbb S^3}h_L(u)^{-4}\,d\sigma(u).
\]
Consequently,
\begin{equation}\label{eq:dim4-AK-M}
        A_K
        =
        2^{-4}\int_{\mathbb S^3}h_L(u)^{-4}\,d\sigma(u)
        =
        \frac{\vol_4(M)}{16\kappa_4}.
\end{equation}

For every \(F\in\G_{4,2}\), projection commutes with Minkowski addition, and
therefore
\[
        P_FL=\frac12(P_FK-P_FK).
\]
The two-dimensional Brunn--Minkowski inequality gives
\[
        \vol_2(P_FL)\ge \vol_2(P_FK).
\]
Hence, since the exponent is negative,
\begin{equation}\label{eq:dim4-BK-to-L}
        B_K^{(2)}
        \ge
        \int_{\G_{4,2}}\vol_2(P_FL)^{-4}\,dF .
\end{equation}

We next use projection-section polarity.  For \(y\in F\),
\[
        h_{P_FL}(y)=h_L(y).
\]
Thus, taking the polar inside \(F\),
\[
        (P_FL)^\circ_F
        =
        \{y\in F:h_{P_FL}(y)\le 1\}
        =
        \{y\in F:h_L(y)\le 1\}
        =
        L^\circ\cap F
        =
        M\cap F.
\]
Therefore
\[
        \vol_2(P_FL)=\vol_2((M\cap F)^\circ),
\]
where the polar is taken inside \(F\).  From \eqref{eq:dim4-BK-to-L} and
\eqref{eq:dim4-planar-D2}, applied to \(A=M\cap F\), we get
\begin{equation}\label{eq:dim4-BK-D2}
\begin{aligned}
        B_K^{(2)}
        &\ge
        \int_{\G_{4,2}}\vol_2((M\cap F)^\circ)^{-4}\,dF     \\
        &\ge
        \frac{8}{\pi^6}
        \int_{\G_{4,2}}D_2(M\cap F)\,dF .
\end{aligned}
\end{equation}

It remains to use the Blaschke--Petkantschin formula in \(\R^4\).  With the
normalized Haar probability measure on \(\G_{4,2}\), it gives
\begin{equation}\label{eq:dim4-BP-D2}
        \int_{\G_{4,2}}D_2(M\cap F)\,dF
        =
        \frac{1}{2\pi^2}\vol_4(M)^2 .
\end{equation}
The constant is obtained by applying the identity to \(M=B_2^4\), for which
\(M\cap F=B_2^2\) for every \(F\), \(D_2(B_2^2)=\pi^2/8\), and
\(\vol_4(B_2^4)=\kappa_4=\pi^2/2\).

Combining \eqref{eq:dim4-BK-D2} and \eqref{eq:dim4-BP-D2}, we obtain
\[
        B_K^{(2)}
        \ge
        \frac{8}{\pi^6}\cdot\frac{1}{2\pi^2}\vol_4(M)^2
        =
        \frac{4}{\pi^8}\vol_4(M)^2 .
\]
On the other hand, by \eqref{eq:dim4-AK-M} and \(\kappa_4=\pi^2/2\),
\[
        \frac{256}{\pi^4}A_K^2
        =
        \frac{256}{\pi^4}\cdot
        \frac{\vol_4(M)^2}{16^2\kappa_4^2}
        =
        \frac{4}{\pi^8}\vol_4(M)^2 .
\]
Therefore \eqref{eq:dim4-first-target} holds, and hence
\[
        I_{1,-4}(K)\ge I_{2,-4}(K)^{1/2}.
\]

\end{document}